\newtheorem{theorem}{Theorem}[section]
\newtheorem{lemma}[theorem]{Lemma}
\newtheorem{proposition}[theorem]{Proposition}
\newtheorem{corollary}[theorem]{Corollary}
\theoremstyle{definition}
\newtheorem{definition}[theorem]{Definition}
\newtheorem{example}[theorem]{Example}
\theoremstyle{remark}
\newtheorem{remark}[theorem]{Remark}
\numberwithin{equation}{section}
\renewcommand{\AA}{\mathcal{A}}
\newcommand{\BB}{\mathcal{B}}
\newcommand{\EE}{\mathcal{E}}
\newcommand{\FF}{\mathcal{F}}
\newcommand{\HH}{\mathscr{H}}
\newcommand{\LL}{\mathcal{L}}
\newcommand{\PP}{\mathcal{P}}
\newcommand{\QQ}{\mathcal{Q}}
\newcommand{\UU}{\mathcal{U}}
\newcommand{\field}[1]{\mathbb{#1}}
\newcommand{\R}{\field{R}}
\newcommand{\N}{\field{N}}
\newcommand{\E}{\field{E}}
\renewcommand{\P}{\field{P}}
\newcommand{\pr}{{\text{\upshape{pr}}}}
\newcommand{\spn}{\mathop{\rm{span}}}
\newcommand{\ba}{{\rm{ba}}}
\newcommand{\al}{\alpha}
\newcommand{\de}{\delta}
\newcommand{\ep}{\varepsilon}
\renewcommand{\th}{\theta}
\newcommand{\la}{\lambda}
\newcommand{\si}{\sigma}
\newcommand{\om}{\omega}
\newcommand{\Om}{\Omega}
\newcommand{\sm}{\setminus}
\newcommand{\es}{\emptyset}
\renewcommand{\ba}{\mathop{\text{\upshape{ba}}}\nolimits}
\newcommand{\ca}{\mathop{\text{\upshape{ca}}}\nolimits}
\newcommand{\BUC}{\mathop{\text{\upshape{BUC}}}\nolimits}
\newcommand{\LSC}{\mathop{\text{\upshape{LSC}}}\nolimits}
\newcommand{\USC}{\mathop{\text{\upshape{USC}}}\nolimits}
\renewcommand{\hat}{\widehat}
\begin{document}

\setcounter{page}{1}

\title[Extension of nonlinear expectations]{Kolmogorov type and general extension results for nonlinear expectations}

\author[R.~Denk, M.~Kupper, \MakeLowercase{and} M.~Nendel]{Robert Denk,$^1$ Michael Kupper,$^2$ \MakeLowercase{and} Max Nendel$^3$}

\address{$^{1}$Department of Mathematics and Statistics, University of Konstanz, 78457 Konstanz Germany}
\email{\textcolor[rgb]{0.00,0.00,0.84}{Robert.Denk@uni-konstanz.de}}

\address{$^{2}$Department of Mathematics and Statistics, University of Konstanz, 78457 Konstanz Germany}
\email{\textcolor[rgb]{0.00,0.00,0.84}{kupper@uni-konstanz.de}}

\address{$^{3}$Department of Mathematics and Statistics, University of Konstanz, 78457 Konstanz Germany}
\email{\textcolor[rgb]{0.00,0.00,0.84}{Max.Nendel@uni-konstanz.de}}

\subjclass[2010]{Primary 28C05; Secondary 47H07, 46A55, 46A20, 28A12.}

\keywords{Nonlinear expectations, extension results, Kolmogorov's extension theorem, nonlinear kernels.}

\date{June 20, 2017}

\begin{abstract}
We provide extension procedures for nonlinear expectations to the space of all bounded measurable functions. We first discuss a maximal extension for convex expectations which have a representation in terms of finitely additive measures.
One of the main results of this paper is an extension procedure for convex expectations which are continuous from above and therefore admit a representation in terms of countably additive measures. This can be seen as a nonlinear version of the Daniell-Stone theorem. From this, we deduce a robust Kolmogorov extension theorem which is then used to extend nonlinear kernels to an infinite dimensional path space. We then apply this theorem to construct nonlinear Markov processes with a given family of nonlinear transition kernels.
\end{abstract}

\maketitle

\section{Introduction}
Given a set $M$ of bounded measurable functions $X\colon\Omega\to\mathbb{R}$ which contains the constants,
a nonlinear expectation is a functional $\EE\colon M\to\mathbb{R}$ which satisfies $\EE(X)\le \EE(Y)$ whenever $X(\omega)\le Y(\omega)$ for all $\omega\in \Omega$, and $\EE(\alpha 1_\Omega)=\alpha$ for all $\alpha\in\mathbb{R}$. If a nonlinear expectation $\EE$ is in addition sublinear, then $\rho(X):=\EE(-X)$, $X\in M$, is a coherent monetary risk measure as introduced by Artzner et al.~\cite{MR1850791} and Delbaen \cite{MR2011534},\cite{MR1929369}, see also F\"ollmer and Schied \cite{MR2779313} for an overview of convex monetary risk measures. Other prominent examples of nonlinear expectations include the g-expectation, see Coquet et al.~\cite{CHMP}, and the G-expectation introduced by Peng \cite{PengG},\cite{MR2474349}, see also Dolinsky et al. \cite{MR2868935} or Denis et al. \cite{MR2754968}. We also refer to Cheridito et al. \cite{MR2319056} and Soner et al. \cite{MR2746175},\cite{MR2842089} for the connection of the latter to fully non-linear PDEs and 2BSDEs.

The first part of this paper deals with the extension of a nonlinear expectation from a subspace $M$ to the space $\LL^\infty$ consisting of all bounded measurable functions $X\colon\Omega\to\mathbb{R}$. In line with  Maccheroni et al.~\cite{MR3153589}, we first show the existence of a maximal extension. In case that $\EE$ is convex on $M$, the maximal extension $\hat\EE$ is also convex, and has a dual representation in terms of finitely additive probability measures. We then focus on extensions which satisfy some additional continuity properties. If $\EE$ is convex and continuous from above on a Riesz subspace $M$, we construct an extension $\bar\EE$, which is continuous from below on $\LL^\infty$ and has a dual representation in terms of $\sigma$-additive probability measures (Theorem \ref{robustcara}). With the help
of Choquet's capacibility theorem \cite{MR0112844} we obtain the uniqueness of such an extension in a certain class of expectations. Thus, our extension result can be viewed as a generalization of the Daniell-Stone extension theorem, which states that a linear expectation $\EE$ which is continuous from above on a Riesz subspace $M$ has a unique linear extension $\bar \EE$ to $\LL^\infty$ over the $\sigma$-algebra $\sigma(M)$ generated by $M$. While for linear expectations the extension is still continuous from above, the same does not hold for convex expectations. Note that the continuity from above of a convex expectation $\EE$ on $\LL^\infty$ is a very strong condition which, in particular, implies that $\EE(X)=\EE(Y)$ whenever $X=Y$ $\mu$-almost surely for some probability measure $\mu$,
and that the representing probability measures in the dual representation of $\EE$ are dominated by $\mu$ as well.
However, nonlinear expectations are continuous from above on certain subspaces of $\LL^\infty$, see e.g.~Cheridito et al.~\cite{CKT} and the references therein. Hence, nonlinear expectations can be constructed by defining them on a subspace $M$ and extending them to $\LL^\infty$, the space of bounded $\sigma(M)$-measurable functions.

In the second part of the paper we illustrate this extension procedure in a Kolmogorov type setting. That is, for an arbitrary index set $I$ we construct nonlinear expectations on $\LL^\infty(S^I)$, where $S^I$ is the $I$-th product of a Polish space $S$. To that end, we first consider a family of expectations $\EE_J$ on linear subsets $M_J$ of  $\LL^\infty(S^J)$,
indexed by the set $\HH$ of all finite subsets of $I$. In line with Peng \cite{MR2143645}, under the natural consistency condition $\EE_K(f)=\EE_J(f\circ \pr_{JK})$ for every
$f\in M_K$ and all $J,K\in\HH$ with $K\subset J$, where $\pr_{JK}$ denotes the projection from $M_J$ to $M_K$, the family
$(\EE_J)$ can be extended to the space $M:=\{f\circ\pr_{J}\colon f\in M_J, \, J\in\HH \}$. Moreover,
if each $\EE_J$ is convex and continuous from above on $M_J$ the same also holds for the extension on $M$. Hence, by the general extension result, Theorem \ref{robustcara}, from the first part, there exists a convex expectation $\bar\EE$ on $\LL^\infty$ which is continuous from below, such that
$\bar\EE(f\circ \pr_J)=\EE_J(f)$ for all $f\in M_J$ and $J\subset I$ finite, see Theorem \ref{extension}. The corresponding dual version in the sublinear case leads to Theorem \ref{kolmogorov}, which is a robust version of Kolmogorov's extension theorem. We refer to Delbaen \cite{MR2276899}, Delbaen et al. \cite{MR2670421}, Cheridito et al. \cite{MR2199055}, F\"ollmer and Penner \cite{MR2323189} or Bartl \cite{dbartl} for a discussion on time consistency for dynamic monetary risk measures.

Finally, we construct consistent families $(\EE_J)$ of nonlinear expectations by means of nonlinear kernels, which are closely related to monetary risk kernels, as introduced by F\"ollmer and Kl\"uppelberg \cite{follmer2014spatial}. For two  subsets $M$ and $N$ of $\LL^\infty$, which contain the constants, a nonlinear kernel from $M$ to $N$ is a mapping $\EE\colon S\times M\to \R$ such that $\EE(x,\, \cdot\, )$ is a nonlinear expectation for all $x\in S$ and $\EE(\, \cdot\, , f)\in N$ for all $f\in M$. We then focus on nonlinear kernels, which map bounded continuous functions to bounded continuous functions in order to deal with stochastic optimal control problems, see e.g. Yong and Zhou \cite{MR1696772} or Fleming and Soner \cite{MR2179357}.

{\bf Notation.}
Let $(\Omega,\FF)$ be a measurable space and denote by $\LL^\infty(\Omega,\FF)$ the space of all bounded $\FF$-$\BB(\R)$-measurable random variables $X\colon \Omega\to \R$. Let $\ba (\Omega,\FF)$ be the space of all finitely additive signed measures of bounded variation on $(\Omega,\FF)$ containing the subset $\ca (\Omega,\FF)$ of all $\sigma$-additive signed measures. We denote by  $\ba_+(\Omega,\FF)$ the set of all positive elements in $\ba(\Omega,\FF)$, and by $\ba_+^1(\Omega,\FF)$ the set of those $\mu\in\ba_+(\Omega,\FF)$ with $\mu(\Omega)=1$. Analogously, we define $\ca_+(\Omega,\FF)$ and $\ca_+^1(\Omega,\FF)$.

Using the identification $\ba(\Omega,\FF)=(\LL^\infty(\Omega,\FF))'$ (cf. \cite{MR1009162}, p.~258), where $(\ldots)'$ stands for the topological dual space, we write $\mu X:= \int_\Om X\, {\rm d}\mu$ for
$\mu\in\ba (\Omega,\FF)$ and $X\in \LL^\infty(\Omega,\FF)$. The space $\LL^\infty(\Omega,\FF)$ and subspaces $M\subset \LL^\infty(\Omega,\FF)$ will always be endowed with the
supremum norm $\|\cdot\|_\infty$, and their dual spaces $\ba(\Omega,\FF)$ and $M'$ with the weak*-topology.
On subsets of these spaces we take the trace topology. On $\LL^\infty(\Omega,\FF)$ we consider the partial order
$X\ge Y$ whenever $X(\omega)\ge Y(\omega)$ for all $\omega\in \Omega$.
For $M\subset\LL^\infty(\Omega,\FF)$, we write $\alpha\in M$ and $\R\subset M$ if $\alpha  1_\Omega\in M$ and
$\{\alpha 1_\Omega: \alpha\in\R\}\subset M$, respectively, where $ 1_A$ is the indicator function of $A\in\FF$.

{\bf Structure of the paper.} In Section \ref{sec2} we study general extension results for nonlinear expectations and state their dual representations. The main extension results for convex expectations, which are continuous from above, are provided in Section \ref{sec3}. In Section \ref{sec4} we discuss nonlinear versions of Kolmogorov's extension theorem which are finally applied to nonlinear kernels in Section \ref{sec5}.


\section{General representation and extension results}\label{sec2}
In this section we introduce the basic definitions and state a maximal extension result for nonlinear expectations and their
dual representations. Throughout, let $M\subset\LL^\infty(\Om,\FF)$  with $\R\subset M$.
The following definition of a nonlinear expectation is due to Peng \cite{MR2143645}.
\begin{definition}\label{2.1}
A \emph{(nonlinear) pre-expectation} $\EE$ on $M$ is a functional $\EE\colon M\to \R$ which satisfies the following properties:
\begin{enumerate}
 \item[(i)] Monotonicity: $\EE(X)\leq \EE(Y)$ for all $X,Y\in M$ with $X\leq Y$.
 \item[(ii)] Constant preserving: $\EE(\al)=\al$ for all $\al\in \R$.
\end{enumerate}
A pre-expectation $\EE$ on $\LL^\infty(\Om,\FF)$ is called an \emph{expectation}.
\end{definition}
Note that a pre-expectation $\EE\colon M\to \R$ satisfies $|\EE(X)|\leq \|X\|_\infty$ for all $X\in M$.

The extension procedure of positive linear functionals by Kantorovich (cf.~\cite{MR0224522}, p.~277) indicates
the following extension of a pre-expectation $\EE\colon M\to\R$ to an expectation $\hat\EE\colon \LL^\infty(\Omega,\FF)\to\R$.
For related extension results on niveloids we refer to Maccheroni et al.~\cite{MR3153589}.

\begin{proposition}\label{2.3}\label{bacara}
 For a pre-expectation $\EE\colon M\to \R$, define \[\hat\EE(X):=\inf\{\EE(X_0)\colon X_0\in M, X_0\geq X\}\]
for all $X\in \LL^\infty(\Om,\FF)$.
\begin{enumerate}
 \item[a)] $\hat\EE\colon \LL^\infty(\Om,\FF)\to \R$ is the maximal expectation with $\hat\EE|_M=\EE$, i.e. $\hat \EE(X)=\EE(X)$ for all $X\in M$ and for every expectation $\tilde \EE\colon \LL^\infty(\Om,\FF)\to \R$ with $\tilde \EE|_M=\EE$ we have that $\tilde \EE(X)\leq \hat \EE(X)$ for all $X\in \LL^\infty(\Om,\FF)$.
 \item[b)] If $M$ is convex and $\EE$ is convex, then $\hat\EE$ is convex.
 \item[c)] If $M$ is a convex cone and $\EE$ is sublinear, then $\hat\EE$ is sublinear.
\end{enumerate}
\end{proposition}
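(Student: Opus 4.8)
The plan is to handle the three parts in sequence, with part a) carrying most of the work and parts b), c) following from the same mechanism of passing an algebraic operation through the infimum. Before anything else I would check that $\hat\EE$ is finite, which is the one place where the standing hypothesis $\R\subset M$ is indispensable. For $X\in\LL^\infty(\Om,\FF)$ the constant $\|X\|_\infty$ belongs to $M$ and dominates $X$, so the feasible set $\{X_0\in M:X_0\ge X\}$ is nonempty; moreover every admissible $X_0$ satisfies $X_0\ge-\|X\|_\infty$, whence monotonicity and constant preservation of $\EE$ give $\EE(X_0)\ge-\|X\|_\infty$. Thus the infimum is a real number (and in fact $|\hat\EE(X)|\le\|X\|_\infty$).

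For part a) I would first verify $\hat\EE|_M=\EE$: for $X\in M$ the choice $X_0=X$ is admissible, giving $\hat\EE(X)\le\EE(X)$, while monotonicity of $\EE$ forces $\EE(X_0)\ge\EE(X)$ for every admissible $X_0$, yielding the reverse inequality. Monotonicity of $\hat\EE$ is the crucial order-reversal observation: if $X\le Y$, then every $X_0$ dominating $Y$ also dominates $X$, so the feasible set for $Y$ sits inside that for $X$, and the infimum over the smaller set is larger. Constant preservation of $\hat\EE$ is then immediate from $\hat\EE|_M=\EE$. Finally, for maximality, given any expectation $\tilde\EE$ extending $\EE$ and any admissible $X_0\ge X$, monotonicity of $\tilde\EE$ together with $\tilde\EE(X_0)=\EE(X_0)$ gives $\tilde\EE(X)\le\EE(X_0)$; taking the infimum over $X_0$ delivers $\tilde\EE\le\hat\EE$.

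For part b), fix $\la\in[0,1]$ and admissible $X_0\ge X$, $Y_0\ge Y$. Convexity of $M$ puts $\la X_0+(1-\la)Y_0$ back in $M$, this element dominates $\la X+(1-\la)Y$, and convexity of $\EE$ bounds $\EE(\la X_0+(1-\la)Y_0)$ by $\la\EE(X_0)+(1-\la)\EE(Y_0)$; minimizing separately over $X_0$ and $Y_0$ gives convexity of $\hat\EE$. Part c) then reduces to positive homogeneity: a cone is in particular convex, so $\hat\EE$ is already convex by b), and a convex positively homogeneous functional is sublinear. For $\la>0$ the map $X_0\mapsto\la X_0$ is a bijection between the feasible sets for $X$ and for $\la X$ (using that $M$ is a cone) that scales $\EE$ by $\la$, so $\hat\EE(\la X)=\la\hat\EE(X)$, the case $\la=0$ being covered by constant preservation.

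I do not expect a genuine obstacle here: the construction is a Kantorovich-type infimal extension, and each assertion reduces to the interplay between the feasibility constraint $X_0\ge X$ and the structural property of $\EE$ one wants to transport. The only steps that reward care are the finiteness argument, which isolates exactly why $\R\subset M$ is required, and keeping the direction of the set inclusions---and hence of the resulting inequalities---straight in the monotonicity and maximality arguments.
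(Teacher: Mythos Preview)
Your proof is correct and follows essentially the same route as the paper's: the finiteness, restriction, monotonicity, and maximality arguments in part a) match the paper's almost line by line, and for parts b) and c) the paper simply asserts that they ``follow directly from the definition of $\hat\EE$,'' which is precisely the infimum mechanism you spell out in detail. The only cosmetic point is that in c) you write ``a cone is in particular convex''; strictly this uses that $M$ is assumed to be a \emph{convex} cone, but the argument is sound.
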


\begin{proof}
 a) Let $X\in \LL^\infty(\Om,\FF)$. Note that $\hat\EE(X)>-\infty$  since for each $X_0\in M$ with $X_0\ge X$ one has $\EE(X_0)\ge \EE(-\|X\|_\infty) = -\|X\|_\infty$. On the other hand, $\|X\|_\infty\in M$ implies $\hat\EE(X)\le \EE(\|X\|_\infty)=\|X\|_\infty$. So $\hat\EE(X)$ is finite.

  If $X\in M$, we have $\EE(X)\leq \EE(X_0)$ for all $X_0\in M$ with $X_0\geq X$, i.e. $\hat\EE(X)=\EE(X)$. Since $\R\subset M$, we thus
 obtain $\hat\EE(\al)=\al$ for all $\al\in \R$. Now let $X,Y\in \LL^\infty(\Om,\FF)$ with $X\leq Y$. Then $Y_0\geq X$ for all $Y_0\in M$ with $Y_0\geq Y$,
 and therefore $\hat\EE(X)\leq \hat\EE(Y)$.

If $\tilde\EE\colon \LL^\infty(\Om,\FF)\to \R$ is another
 expectation with $\tilde\EE|_M=\EE$ and $X\in \LL^\infty(\Om,\FF)$, then
 \[
  \tilde\EE(X)\leq \tilde\EE(X_0)=\EE(X_0)
 \]
 for all $X_0\in M$ with $X_0\geq X$. Hence, $\tilde\EE(X)\leq \hat\EE(X)$.

The statements b) and c) follow directly from the definition of  $\hat\EE$.
\end{proof}

\begin{remark}\label{minexp}
For a pre-expectation $\EE\colon M\to \R$, let $\check\EE(X):=\sup\{\EE(X_0)\colon X_0\in M, X_0\leq X\}$ for all $X\in \LL^\infty(\Om,\FF)$. Then, one readily verifies that $\check\EE\colon \LL^\infty(\Om,\FF)\to \R$ is the smallest expectation which extends $\EE$. However, convexity of $\EE$ usually does not carry over to $\check\EE$.
\end{remark}

Throughout the remainder of this section, let $M\subset\LL^\infty(\Omega,\FF)$ be a linear subspace with $1\in M$. In this case, we can give an explicit description of $\hat\EE$, using tools from convex analysis and duality theory. For a convex function $\EE\colon M\to \R$, let $\EE^*$ be its conjugate function
\[
\EE^*(\mu) := \sup_{X\in M} \big( \mu X-\EE(X)\big)
\]
where $\mu\colon M\to\R$ is a linear functional. We start with the well-known representation of convex pre-expectations on $M$. For the sake of completeness we give a proof in the Appendix \ref{appA}.

\begin{lemma}\label{2.5}\label{compactness}
 Let  $\EE\colon M\to \R$ be a convex pre-expectation. Then, every linear functional $\mu\colon M\to \R$ with $\EE^*(\mu)<\infty$ is a linear pre-expectation.
 Further, one has
 \begin{equation}\label{rep:M}
\EE(X) = \max_{\mu\in M'}\big( \mu X-\EE^*(\mu)\big)\quad \mbox{for all }X\in M,
\end{equation}
where
the maximum is attained on the convex compact set $\{\mu\in M': \EE^*(\mu)\le \alpha\}$ for every $\alpha\ge  \|X\|_\infty-\EE(X)$.
If $\EE$ is sublinear, then $\EE^*(\mu)<\infty$ implies that $\EE^*(\mu)=0$ for all $\mu\in M'$ and we obtain that
 \[ \EE(X) = \max \big\{  \mu X: \mu\in M',\, \EE^*(\mu)=0\big\}.\]
\end{lemma}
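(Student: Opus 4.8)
The plan is to read the displayed identity as a Fenchel--Moreau duality statement for the finite convex function $\EE$, and to upgrade the supremum to an attained maximum by confining the domain of $\EE^*$ to the unit ball and invoking Banach--Alaoglu. First I would prove the opening assertion: if $c:=\EE^*(\mu)<\infty$, then $\mu X\le\EE(X)+c$ for all $X\in M$. Testing this on constants $X=\al 1$ and using $\EE(\al)=\al$ gives $\al\,\mu(1)\le\al+c$ for every $\al\in\R$; dividing by $\al$ and sending $\al\to\pm\infty$ forces $\mu(1)=1$. For positivity, if $X\ge 0$ I would test the inequality on $-\la X$ (which lies in $M$) for $\la>0$: since $-\la X\le 0$ and $\EE$ is monotone and constant preserving, $\EE(-\la X)\le\EE(0)=0$, so $-\la\,\mu X\le c$ and hence $\mu X\ge -c/\la$; letting $\la\to\infty$ yields $\mu X\ge 0$. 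Thus $\mu$ is a linear pre-expectation, and comparing with the constants $\pm\|X\|_\infty\in M$ gives $|\mu X|\le\|X\|_\infty$, i.e. $\|\mu\|=\mu(1)=1$. In particular the whole effective domain $\{\mu:\EE^*(\mu)<\infty\}$ sits inside the closed unit ball of $M'$.

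For the representation, the inequality $\mu X-\EE^*(\mu)\le\EE(X)$ holds for all $\mu$ straight from the definition of the conjugate. For the reverse inequality I would use that $\EE$ is finite and $1$-Lipschitz (from $|\EE(X)|\le\|X\|_\infty$), hence continuous and convex on the normed space $M$; the Fenchel--Moreau theorem --- equivalently, the existence of a continuous subgradient at $X$ furnished by Hahn--Banach --- then produces $\mu\in M'$ with $\EE(X)=\mu X-\EE^*(\mu)$. This already yields $\EE(X)=\sup_{\mu\in M'}(\mu X-\EE^*(\mu))$ with the supremum attained.

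To locate the maximizer on the advertised set, I would observe that $\EE^*$, being a supremum of weak*-continuous affine maps, is weak*-lower semicontinuous, so each sublevel set $\{\mu:\EE^*(\mu)\le\al\}$ is weak*-closed; contained in the unit ball, it is weak*-compact by Banach--Alaoglu. The functional $\mu\mapsto\mu X-\EE^*(\mu)$ is weak*-upper semicontinuous and bounded above by $\EE(X)$, so it attains its maximum over the compact unit ball at some $\mu^*$; since the effective domain lies in the ball, this maximum equals $\EE(X)$. Then $\EE^*(\mu^*)=\mu^*X-\EE(X)\le\|X\|_\infty-\EE(X)\le\al$, placing $\mu^*$ in the claimed compact sublevel set. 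Finally, if $\EE$ is sublinear, positive homogeneity gives $\mu(\la X)-\EE(\la X)=\la(\mu X-\EE(X))$ for $\la\ge 0$, so $\EE^*(\mu)$ is either $+\infty$ (when $\mu X>\EE(X)$ for some $X$) or equal to its value $0$ at $X=0$; hence $\EE^*(\mu)<\infty$ is equivalent to $\EE^*(\mu)=0$, and the representation collapses to the stated sublinear formula.

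The main obstacle is the passage from supremum to attained maximum together with the explicit bound on $\EE^*(\mu^*)$: this rests entirely on first confining the effective domain to the unit ball via the opening assertion, so that Banach--Alaoglu applies, and then reading off $\EE^*(\mu^*)=\mu^*X-\EE(X)$ at the maximizer. The invocation of Fenchel--Moreau for the reverse inequality is routine but is where the genuine analytic content sits.
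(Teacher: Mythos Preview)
Your proof is correct and follows essentially the same route as the paper: both establish that functionals in the effective domain of $\EE^*$ are positive and normalized via scaling arguments, invoke Hahn--Banach to produce the maximizer (you package this as Fenchel--Moreau/subgradient existence, the paper does it by hand via a one-dimensional subgradient of $\al\mapsto\EE(\al X)$ at $\al=1$ and then extends), and use Banach--Alaoglu on the unit ball for compactness of the sublevel sets. One small slip: the $1$-Lipschitz property does not follow from the bound $|\EE(X)|\le\|X\|_\infty$ alone (the translation property is a \emph{consequence} of this lemma, not an input), but continuity of $\EE$---which is all you actually need for the subgradient to exist---does follow from convexity together with local boundedness.
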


The previous lemma shows that a convex pre-expectation has the \emph{translation property}, i.e.
$\EE(X+\al)=\EE(X)+\al$ for all $X\in M$ and $\al\in\R$. In particular, $\EE$ is $1$-Lipschitz continuous and $\rho(X):=\EE(-X)$ defines a convex risk measure on $M$.
For a discussion of risk measures we refer to F\"ollmer and Schied \cite{MR2779313} and the references therein.

\begin{remark}\label{2.6}
  We apply Lemma~\ref{2.5} to the linear case. Let $\mu\in M'$ be a linear pre-expectation. Then,
  \begin{equation}\label{2-2}
   \hat \mu(X) = \max\{ \nu X: \nu\in\ba_+^1(\Omega,\FF),\, \nu|_M=\mu\}\quad \mbox{for all }X\in\LL^\infty(\Omega,\FF).
   \end{equation}
  In fact, by Lemma~\ref{2.3} c) $\hat \mu$ is a sublinear expectation, and an application of Lemma~\ref{2.5} with $M=\LL^\infty(\Omega,\FF)$ yields
  \[ \hat\mu(X) = \max\{\nu X: \nu\in \ba (\Omega,\FF),\, \hat\mu^*(\nu)=0\}.\]
  For each $\nu\in\ba(\Omega,\FF)$ with $\hat\mu^*(\nu)=0$, another application of Lemma~\ref{2.5}  implies $\nu\in\ba_+^1(\Omega,\FF)$, and from
  $\hat\mu(X)\ge \nu X$ for all $X\in\LL^\infty(\Omega,\FF)$ we see that $\nu X_0\le\hat\mu(X_0)=\mu(X_0)$ for all $X_0\in M$. As $M$ is a linear subspace it follows that $\nu|_M=\mu$. This implies ``$\le$'' in \eqref{2-2}. On the other hand, each $\nu\in\ba_+^1(\Omega,\FF)$ with $\nu|_M=\mu$ is an expectation extending $\mu$ and therefore $\hat\mu(X)\ge \nu X$ by the maximality of $\hat\mu$.
  \unskip\nobreak\hfill$\Diamond$\par\addvspace{\medskipamount}
\end{remark}

\begin{theorem}\label{2.7}
  Let $\EE\colon M\to \R$ be a convex pre-expectation on $M$. Then, the maximal extension $\hat \EE$ has the representation
  \begin{equation}\label{repEhat}
  \hat \EE(X) = \max_{\substack{\mu\in M'\\ \EE^*(\mu)<\infty}} \big( \hat\mu(X) - \EE^*(\mu)\big) = \max_{\nu\in\ba_+^1(\Omega,\FF)} \big( \nu X - \EE^*(\nu|_M)\big)\
  \end{equation}
  for all $X\in \LL^\infty(\Omega,\FF)$. Moreover, $\hat\EE^\ast(\nu)=\EE^\ast(\nu|_M)$ for all $\nu\in \ba_+^1(\Omega,\FF)$.
\end{theorem}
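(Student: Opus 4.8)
The plan is to reduce everything to the conjugate identity $\hat\EE^\ast(\nu)=\EE^\ast(\nu|_M)$ and then read off the representation \eqref{repEhat} from Lemma~\ref{2.5} applied to $\hat\EE$ on all of $\LL^\infty(\Om,\FF)$. I would begin by recording that $\hat\EE$ is itself a convex expectation (Proposition~\ref{2.3} a), b)), so Lemma~\ref{2.5} with $M=\LL^\infty(\Om,\FF)$ already yields
\[
\hat\EE(X)=\max_{\nu\in\ba(\Om,\FF)}\big(\nu X-\hat\EE^\ast(\nu)\big),
\]
with the maximum attained, and moreover guarantees that any $\nu$ with $\hat\EE^\ast(\nu)<\infty$ is a linear pre-expectation on $\LL^\infty(\Om,\FF)$, i.e. an element of $\ba_+^1(\Om,\FF)$. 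Thus the outer maximum automatically ranges only over $\ba_+^1(\Om,\FF)$, and it remains only to identify $\hat\EE^\ast$ on that set.

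For the conjugate computation I would unfold the definition of $\hat\EE$ into the Fenchel transform: since $-\hat\EE(X)=\sup\{-\EE(X_0):X_0\in M,\,X_0\ge X\}$, one gets
\[
\hat\EE^\ast(\nu)=\sup_{X\in\LL^\infty(\Om,\FF)}\ \sup_{\substack{X_0\in M\\ X_0\ge X}}\big(\nu X-\EE(X_0)\big)
=\sup_{X_0\in M}\Big(\big[\textstyle\sup_{X\le X_0}\nu X\big]-\EE(X_0)\Big).
\]
Here the key point is positivity: for $\nu\in\ba_+(\Om,\FF)$ the inner supremum over $X\le X_0$ equals $\nu X_0$, attained at $X=X_0$, whence $\hat\EE^\ast(\nu)=\sup_{X_0\in M}(\nu X_0-\EE(X_0))=\EE^\ast(\nu|_M)$. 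In particular this holds for every $\nu\in\ba_+^1(\Om,\FF)$, which is the last assertion of the theorem. For completeness I would also note that if $\nu$ fails to be positive there is $Y\ge 0$ with $\nu Y<0$, and replacing $X_0$ by $X_0-tY$ and letting $t\to\infty$ forces the inner supremum, hence $\hat\EE^\ast(\nu)$, to be $+\infty$; this reproves that only $\ba_+^1(\Om,\FF)$ matters, consistently with Lemma~\ref{2.5}.

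Combining the two displays gives at once the second equality in \eqref{repEhat}, namely $\hat\EE(X)=\max_{\nu\in\ba_+^1(\Om,\FF)}(\nu X-\EE^\ast(\nu|_M))$, with attained maximum. For the first equality I would group the admissible $\nu$ according to their trace $\mu=\nu|_M\in M'$: for each linear pre-expectation $\mu$ with $\EE^\ast(\mu)<\infty$, Remark~\ref{2.6} gives $\hat\mu(X)=\max\{\nu X:\nu\in\ba_+^1(\Om,\FF),\,\nu|_M=\mu\}$, so that $\hat\mu(X)-\EE^\ast(\mu)=\max\{\nu X-\EE^\ast(\nu|_M):\nu\in\ba_+^1(\Om,\FF),\,\nu|_M=\mu\}$; taking the maximum over all such $\mu$ recovers exactly the middle expression. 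The terms with $\EE^\ast(\nu|_M)=\infty$ contribute $-\infty$ and may be discarded, so the three quantities coincide.

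The main obstacle is the conjugate computation, specifically the interchange of the two suprema and the use of positivity to evaluate $\sup_{X\le X_0}\nu X=\nu X_0$; everything else is bookkeeping once Lemma~\ref{2.5} and Remark~\ref{2.6} are in hand. I would take care that the attainment of the maxima, as opposed to mere suprema, is imported from the compactness statement in Lemma~\ref{2.5} applied to $\hat\EE$, rather than reproved.
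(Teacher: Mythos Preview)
Your proof is correct and takes a genuinely different route from the paper. The paper first establishes the representation \eqref{repEhat} by combining the maximality of $\hat\EE$ (giving one inequality) with Lemma~\ref{2.5} applied to $\hat\EE$ and Remark~\ref{2.6} (giving the other), and only afterwards proves the conjugate identity $\hat\EE^\ast(\nu)=\EE^\ast(\nu|_M)$ via an $\ep$-approximation argument using the definition of $\hat\EE$ as an infimum. You invert the order: you compute the conjugate identity \emph{first}, directly, by unfolding $-\hat\EE(X)=\sup_{X_0\ge X}(-\EE(X_0))$ inside the Fenchel transform, interchanging the two suprema, and exploiting positivity of $\nu$ to collapse the inner one; the representation then drops out of Lemma~\ref{2.5} with no further work. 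Your argument is shorter and makes transparent that the conjugate identity is the real content of the theorem, while the paper's approach highlights the role of maximality more explicitly. One small wording issue: in your side remark on non-positive $\nu$ you write ``replacing $X_0$ by $X_0-tY$'' where you presumably mean taking $X=X_0-tY$ in the inner supremum (the outer variable $X_0$ must stay in $M$, which need not contain $Y$); since you already note this is redundant with Lemma~\ref{2.5}, it does not affect the proof.
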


\begin{proof}
By Lemma \ref{2.5}, we have that every $\mu\in M'$ with $\EE^*(\mu)<\infty$ is a linear pre-expectation on $M$ and therefore, $\hat\mu$ is well-defined. Let $X\in \LL^\infty(\Om,\FF)$. By the maximality of $\hat\EE$ we have that
\[
 \sup_{\substack{\mu\in M'\\ \EE^*(\mu)<\infty}} \big( \hat\mu(X)-\EE^*(\mu)\big)\leq \hat\EE(X)
\qquad
\mbox{and}
\qquad
  \sup_{\nu\in\ba_+^1(\Omega,\FF)} \big( \nu X - \EE^*(\nu|_M)\big)\leq \hat\EE(X)
\]
as the left-hand sides are expectations extending $\EE$. By Lemma \ref{2.5} applied to $\hat\EE$ and $M=\LL^\infty(\Om,\FF)$, there exists a linear expectation $\nu\in \ba_+^1(\Om,\FF)$ with $\hat\EE^*(\nu)<\infty$ and $\hat\EE(X)=\nu X-\hat\EE^*(\nu)$. Then, $\mu:=\nu|_M\in M'$ is a linear pre-expectation with $\EE^*(\mu)\leq \hat\EE^*(\nu)<\infty$. By Remark \ref{2.6} we get that
\[
 \hat\EE(X)=\nu X-\hat\EE^*(\nu)\leq \hat\mu (X)-\EE^*(\mu)\leq \hat\EE(X).
\]
It remains to show that $\hat\EE^*(\nu)=\EE^*(\nu|_M)$ for all $\nu\in \ba_+^1(\Om,\FF)$. Clearly, $\EE^*(\nu|_M)\leq \hat\EE^*(\nu)$. In order to show the inverse inequality, let $\nu\in \ba_+^1(\Om,\FF)$ with $\EE^*(\nu|_M)<\infty$, $X\in \LL^\infty(\Om,\FF)$ and $\ep>0$. Then, there exists some $X_0\in M$ with $X_0\geq X$ and $\EE(X_0)\leq \hat\EE(X)+\ep$. Hence, we get that
\[
 \nu X-\hat\EE(X)\leq \nu X-\hat\EE(X_0)+\ep\leq\nu X_0-\hat\EE(X_0)+\ep \leq \EE^*(\nu|_M)+\ep.
\]
Letting $\ep\searrow 0$, we obtain that $\nu X-\hat\EE(X)\leq \EE^*(\nu|_M)$ and the proof is complete.
\end{proof}


\section{Continuous extensions of nonlinear expectations}\label{sec3}
Although the maximal extension $\hat\EE$ is rather straightforward, its representation \eqref{repEhat}
is in terms of finitely additive measures in $\ba_+^1(\Omega,\FF)$. In this section we focus on an alternative extension
admitting a representation with probability measures in $\ca_+^1(\Omega,\FF)$. Throughout this section, let $M\subset\LL^\infty(\Om,\FF)$ be a linear subspace with $1\in M$.

\begin{definition}
Let $\EE\colon M\to \R$ be a pre-expectation on $M$.
\begin{enumerate}
 \item[a)] We say that $\EE$ is \textit{continuous from above} if $\EE(X_n)\searrow \EE(X)$
for all $(X_n)_{n\in \N}\in M^{\mathbb N}$ and $X\in M$ with $X_n\searrow X$ as $n\to \infty$.
 \item[b)] We say that $\EE$ is \textit{continuous from below} if $\EE(X_n)\nearrow \EE(X)$
for all $(X_n)_{n\in \N}\in M^{\mathbb N}$ and $X\in M$ with $X_n\nearrow X$ as $n\to \infty$.
\end{enumerate}
\end{definition}

In Fan \cite{MR0055678}, a function $f\colon E\times F\to \R$ defined on arbitrary sets $E$ and $F$ is said to be convex on $F$ if for all $y_1,y_2\in F$ and $\lambda\in [0,1]$ there exists an element $y_0\in F$ such that
\[
f(x, y_0)\le \lambda f(x,y_1)+(1-\lambda) f(x,y_2)\quad \mbox{for all }x\in E.
\]
Analogously, concavity on $E$ is defined. By Fan's minimax theorem (\cite{MR0055678}, Theorem 2) one has
\[
\max_{x\in E} \inf_{y\in F} f(x,y) = \inf_{y\in F} \max_{x\in E} f(x,y)
\]
if $E$ is a compact Hausdorff space, $f(\cdot,y)$ is upper semicontinuous on $E$ for each $y\in F$, $f$ is convex on $F$ and concave on $E$.

\begin{lemma}\label{contabove}
Let $\EE\colon M\to \R$ be a convex pre-expectation. Then, $\EE$ is continuous from above if and only if every $\mu\in M^\prime$ with $\EE^*(\mu)<\infty$ is continuous from above.
\end{lemma}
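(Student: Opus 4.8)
The plan is to prove the two implications separately, relying on the dual representation from Lemma~\ref{2.5} throughout, and invoking Fan's minimax theorem for the nontrivial direction. In both directions the representing functionals $\mu\in M'$ with $\EE^*(\mu)<\infty$ are, by Lemma~\ref{2.5}, linear pre-expectations, hence monotone with $\mu(0)=0$, and $\EE^*$ is weak*-lower semicontinuous as a supremum of weak*-continuous affine functions.

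First I would dispatch the easy implication: continuity from above of $\EE$ forces continuity from above of each such $\mu$. Fix $\mu$ with $\EE^*(\mu)<\infty$ and let $X_n\searrow X$ in $M$; put $Y_n:=X_n-X\in M$, so $Y_n\searrow 0$ and $\mu Y_n\ge 0$ by monotonicity. The engine is the conjugate inequality $\mu Z-\EE(Z)\le\EE^*(\mu)$ applied to $Z=tY_n$ for a scaling parameter $t>0$, which after dividing by $t$ reads
\[
\mu Y_n\le \tfrac1t\EE(tY_n)+\tfrac1t\EE^*(\mu).
\]
Since $tY_n\searrow 0$ for fixed $t$, continuity from above of $\EE$ gives $\EE(tY_n)\to\EE(0)=0$, hence $\limsup_n\mu Y_n\le \EE^*(\mu)/t$; letting $t\to\infty$ and using $\EE^*(\mu)<\infty$ forces $\mu Y_n\to 0$, i.e.\ $\mu X_n\searrow\mu X$.

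For the converse, assume every such $\mu$ is continuous from above and let $X_n\searrow X$ in $M$. Monotonicity makes $\EE(X_n)$ decreasing with limit $\ge\EE(X)$, so it suffices to show $\lim_n\EE(X_n)\le\EE(X)$. Set $R:=\max(\|X\|_\infty,\|X_1\|_\infty)$; from $X\le X_n\le X_1$ pointwise one gets $\|X_n\|_\infty\le R$ and $\EE(X_n)\ge\EE(X)\ge -R$, so $\|X_n\|_\infty-\EE(X_n)\le 2R$ for all $n$, and likewise $\|X\|_\infty-\EE(X)\le 2R$. By the attainment statement in Lemma~\ref{2.5}, the compact convex set $E:=\{\mu\in M':\EE^*(\mu)\le 2R\}$ realizes all the relevant maxima simultaneously, namely $\EE(X_n)=\max_{\mu\in E}(\mu X_n-\EE^*(\mu))$ and $\EE(X)=\max_{\mu\in E}(\mu X-\EE^*(\mu))$. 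Defining $f(\mu,n):=\mu X_n-\EE^*(\mu)$ on $E\times\N$, the function $f(\cdot,n)$ is weak*-upper semicontinuous and concave on $E$, while the decrease of $(X_n)$ together with monotonicity of each $\mu\in E$ makes $f$ convex on $\N$ in Fan's sense via $n_0:=\max(n_1,n_2)$. Fan's minimax theorem (\cite{MR0055678}) then yields
\[
\lim_n\EE(X_n)=\inf_n\max_{\mu\in E}f(\mu,n)=\max_{\mu\in E}\inf_n f(\mu,n),
\]
and continuity from above of each $\mu\in E$ gives $\inf_n f(\mu,n)=\mu X-\EE^*(\mu)$, so the right-hand side equals $\max_{\mu\in E}(\mu X-\EE^*(\mu))=\EE(X)$.

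The main obstacle is this converse direction, and specifically the interchange of the limit in $n$ with the maximum over $\mu$. The decisive preparatory step is producing a single compact set $E$ on which all the maxima $\EE(X_n)$ and $\EE(X)$ are attained, since only then does Fan's theorem apply to one fixed $E$; the uniform bound $\|X_n\|_\infty-\EE(X_n)\le 2R$ is what secures this. Checking the semicontinuity and concavity of $f$ on $E$ is routine, but the convexity of $f$ on $\N$ genuinely uses that $(X_n)$ is decreasing (the index set $\N$ carries no linear structure, so one must exhibit $n_0$ explicitly), and the identification $\inf_n f(\mu,n)=\mu X-\EE^*(\mu)$ is exactly the place where the hypothesis that each $\mu$ is continuous from above enters.
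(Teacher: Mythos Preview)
Your proof is correct and follows essentially the same approach as the paper: the scaling trick with the conjugate inequality for the forward implication, and Fan's minimax theorem over a compact sublevel set of $\EE^*$ for the converse. Your choice of level $2R$ for $E$ is slightly more generous than the paper's $\|X_1\|_\infty-\EE(X)$, and you spell out the verification of Fan's convexity hypothesis on $\N$ via $n_0=\max(n_1,n_2)$, but the arguments are otherwise identical.
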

\begin{proof} Recall that by Lemma \ref{2.5} one has
$\EE(Y) = \max_{\mu\in M'}\big( \mu Y-\EE^*(\mu)\big)$ for all $Y\in M$, and
let $(X_n)_{n\in \N}\in M^\N$ such that $X_n\searrow X$ for some $X\in M$.

If $\EE$ is continuous from above, then for every $\mu\in M^\prime$ with $\EE^*(\mu)<\infty$ and all $\la>0$ one has
\begin{align*}
 0&\leq \mu X_n -\mu X=\mu(X_n-X)=\la^{-1}\mu(\la (X_n-X))\\
 &\leq \la^{-1} \EE(\la(X_n-X))+\la^{-1} \EE^*(\mu) \searrow \la^{-1} \EE^*(\mu),\quad \mbox{as }n\to \infty.
\end{align*}
Letting $\la \to\infty$, we thus obtain that $\inf_{n\in \N}\mu X_n =\mu X$.

Conversely, if every $\mu\in M^\prime$ with $\EE^*(\mu)<\infty$ is continuous from above,
we apply Fan's minimax theorem with $E:=\{\mu\in M':\EE^*(\mu)\le \|X_1\|_\infty-\EE(X)\}$,
$F:= \mathbb{N}$ and $f\colon E\times F\to \R,\; (\mu,n)\mapsto \mu X_n-\EE^*(\mu)$, and obtain
\begin{align*}
  \EE(X)&=\max_{\mu\in E}\left(\mu X-\EE^*(\mu)\right)=\max_{\mu\in E}\inf_{n\in \N}\left(\mu X_n-\EE^*(\mu)\right)\\
  &=\inf_{n\in \N}\max_{\mu\in E}\left(\mu X_n-\EE^*(\mu)\right)=\inf_{n\in \N}\EE(X_n),
 \end{align*}
so that $\EE$ is continuous from above.
\end{proof}

\begin{remark}
Let $\EE\colon \LL^\infty(\Om,\FF)\to \R$ be a convex expectation which is continuous from above. Then, Lemma \ref{compactness} and Lemma \ref{contabove} imply that
\[
  \PP_n:=\{\mu\in \ca_+^1(\Om,\FF)\colon \EE^*(\mu)\leq n\}
\]
is a compact subset of $\ba(\Om,\FF)$ for all $n\in \N$. Hence, for all $n\in \N$ there exists a probability measure $\nu_n\in \ca_+^1(\Om,\FF)$ such that all $\mu\in \PP_n$ are
$\nu_n$-continuous and the family $\big\{\frac{{\rm d}\mu}{{\rm d}\nu_n}\colon \mu\in \PP_n\big\}$ is uniformly integrable (cf. \cite{MR2267655}, p. 291). Therefore, every
$\mu\in \ca_+^1(\Om,\FF)$ with $\EE^*(\mu)<\infty$ is absolutely continuous w.r.t. the probability measure $\nu:=\sum_{n=1}^\infty2^{-n}\nu_n$. By Lemma \ref{compactness} and
Lemma \ref{contabove} we have that
\[
 \EE(X)=\EE(Y)
\]
for all $X,Y\in \LL^\infty(\Om,\FF)$ with $X=Y$ $\nu$-almost surely.
\unskip\nobreak\hfill$\Diamond$\par\addvspace{\medskipamount}
\end{remark}

Since the continuity from above on $\LL^\infty(\Om,\FF)$ of a convex expectation $\EE$ already implies that $\EE$ is dominated by some reference measure, this assumption is too strong in many applications. This motivates the following

\begin{definition}
 Let $\EE\colon \LL^\infty(\Om,\FF)\to \R$ be a convex expectation. Then, we say that $(\Om,\FF,\EE)$ is a \textit{convex expectation space} if there exists a set of probability measures $\PP\subset \ca_+^1(\Om,\FF)$ such that
 \[
  \EE (X)=\sup_{\mu\in \PP}\big(\mu X-\EE^*(\mu)\big)
 \]
 for all $X\in \LL^\infty(\Om,\FF)$. If in addition $\EE$ is sublinear, then $(\Om,\FF,\EE)$ is called a \textit{sublinear expectation space}.
\end{definition}

The following proposition is a standard result which shows that in a topological space $\Om$ tightness is sufficient to at least obtain continuity from above on $C_b(\Om)$. For the reader's convenience we provide a proof of this statement.

\begin{proposition}\label{tightness}
Let $\Om$ be a topological space with Borel $\si$-algebra $\FF$ on $\Om$, and let $\EE\colon C_b(\Om)\to \R$ be given by
\[
 \EE(X):=\sup_{\mu\in \PP}\mu X \quad \mbox{for all }X\in C_b(\Om),
\]
where $\PP\subset \ca_+^1(\Om,\FF)$ is tight.
Then, the sublinear pre-expectation $\EE$ is continuous from above.
\end{proposition}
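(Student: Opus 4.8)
The plan is to reduce to the case $X=0$ and then combine Dini's theorem with the tightness of $\PP$. First I would record that by monotonicity the sequence $\EE(X_n)$ is decreasing and bounded below by $\EE(X)$, so $\lim_{n\to\infty}\EE(X_n)$ exists and is $\geq\EE(X)$; the whole content is therefore the reverse inequality $\lim_{n\to\infty}\EE(X_n)\leq\EE(X)$. Using sublinearity of $\EE$ one has $\EE(X_n)\leq\EE(X_n-X)+\EE(X)$, so it suffices to show $\EE(Y_n)\to0$ for the functions $Y_n:=X_n-X\in C_b(\Om)$. These satisfy $0\leq Y_n\searrow0$ pointwise, and since $0\leq Y_n\leq Y_1$ we have the uniform bound $\|Y_n\|_\infty\leq\|Y_1\|_\infty$ for all $n$.

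Next I would fix $\varepsilon>0$ and exploit tightness: choose a compact set $K\subseteq\Om$ with $\mu(\Om\setminus K)<\varepsilon$ for every $\mu\in\PP$ simultaneously. On the compact set $K$ the continuous functions $Y_n$ decrease pointwise to $0$, so Dini's theorem upgrades this to uniform convergence on $K$; hence there is an $N\in\N$ with $\sup_{x\in K}Y_n(x)\leq\varepsilon$ for all $n\geq N$.

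The final step is to split each integral over $K$ and its complement. For $n\geq N$ and any $\mu\in\PP$,
\[
\mu Y_n=\int_K Y_n\,{\rm d}\mu+\int_{\Om\setminus K}Y_n\,{\rm d}\mu\leq\varepsilon\,\mu(K)+\|Y_1\|_\infty\,\mu(\Om\setminus K)\leq\bigl(1+\|Y_1\|_\infty\bigr)\varepsilon,
\]
using $\mu(K)\leq1$ on the first term and the uniform bound $0\leq Y_n\leq Y_1$ together with the tightness estimate on the second. Taking the supremum over $\mu\in\PP$ gives $\EE(Y_n)\leq(1+\|Y_1\|_\infty)\varepsilon$ for all $n\geq N$. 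Since $\varepsilon>0$ was arbitrary and $\EE(Y_n)\geq0$ is decreasing in $n$, we conclude $\EE(Y_n)\searrow0$, which by the reduction above yields $\EE(X_n)\searrow\EE(X)$.

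I expect the main obstacle to be the simultaneous control of the tail mass $\int_{\Om\setminus K}Y_n\,{\rm d}\mu$ uniformly over all $\mu\in\PP$: Dini's theorem only handles the behaviour on the compact set $K$, and without tightness there would be no way to bound the contribution from $\Om\setminus K$ uniformly in $\mu$, so the interchange of $\sup_{\mu\in\PP}$ with the limit in $n$ would fail. Tightness, combined with the uniform norm bound $\|Y_n\|_\infty\leq\|Y_1\|_\infty$, is exactly what makes the tail term negligible for every measure at once.
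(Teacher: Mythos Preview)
Your proof is correct and follows essentially the same approach as the paper: reduce to the case $X=0$ via sublinearity, use tightness to pick a compact $K$, apply Dini's theorem on $K$, and split the integral over $K$ and $\Om\setminus K$. The only cosmetic differences are that the paper performs the reduction at the end rather than the beginning, and scales the choice of $K$ so the final bound is $\varepsilon$ rather than $(1+\|Y_1\|_\infty)\varepsilon$.
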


\begin{proof}
Let $(X_n)_{n\in \N}$ be a sequence in $C_b(\Om)$ with $X_n\searrow 0$ and $\ep>0$. We may w.l.o.g.~assume that $X_1\neq 0$. As $\PP$ is tight, there exists a compact set $K\subset \Om$ such that
\[
 \sup_{\mu\in \PP} \mu(\Om \setminus K)\leq \ep\|X_1\|_\infty^{-1}.
\]
 By Dini's lemma, we have that $\|X_n1_K\|_\infty\to 0$. Hence,
\begin{align*}
 \EE(X_n)&\leq \sup_{\mu\in \PP}\mu(X_n1_K)+  \sup_{\mu\in \PP}\mu(X_n(1-1_K))\\
 &\leq \|X_n1_K\|_\infty+\|X_1\|_\infty \sup_{\mu\in \PP}\mu (\Om \setminus K)\\
 &\leq \|X_n1_K\|_\infty+\ep\to \ep, \quad \mbox{as }n\to \infty.
\end{align*}
Letting $\ep\searrow 0$, we obtain that $\lim_{n\to \infty}\EE(X_n)=0$ and therefore, $\EE$ is continuous from above at $0$. Since $\EE$ is sublinear, it is continuous from above.
\end{proof}

The following Lemma is a variant of Extensions du th\'eor\`eme 1 a) in Choquet \cite{MR0112844}.

\begin{lemma}\label{extbelow}
 Let $\FF\subset 2^\Om$ be a $\si$-algebra and $\EE\colon \LL^\infty(\Om,\FF)\to \R$ be continuous from below. Then, $\hat\EE\colon \LL^\infty(\Om,2^\Om)\to \R$ is continuous from below as well.
\end{lemma}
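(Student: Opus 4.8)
The plan is to mimic the classical proof that the outer measure generated by a countably additive measure is continuous from below, the nonlinear subtlety being that only monotonicity of $\EE$—not additivity—is available. Fix a sequence $(X_n)_{n\in\N}$ in $\LL^\infty(\Om,2^\Om)$ and some $X\in\LL^\infty(\Om,2^\Om)$ with $X_n\nearrow X$. Since $\hat\EE$ is a (monotone) expectation by Proposition \ref{2.3}, one has $\hat\EE(X_n)\le\hat\EE(X_{n+1})\le\hat\EE(X)$, so $\lim_{n\to\infty}\hat\EE(X_n)$ exists and is at most $\hat\EE(X)$. The entire content of the lemma is therefore the reverse inequality $\hat\EE(X)\le\lim_{n\to\infty}\hat\EE(X_n)=:a$.

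By definition of the maximal extension, for each $n\in\N$ I would pick $Y_n\in\LL^\infty(\Om,\FF)$ with $Y_n\ge X_n$ and $\EE(Y_n)\le\hat\EE(X_n)+2^{-n}$; after replacing $Y_n$ by $Y_n\wedge\|X\|_\infty$ (which still dominates $X_n\le\|X\|_\infty$ and does not increase $\EE(Y_n)$) I may assume the $Y_n$ to be uniformly bounded. The decisive step is \emph{not} to pass to the running maxima of the $Y_n$—for which $\EE$ could not be controlled without a submodularity assumption—but rather to the tail infima
\[
W_n:=\inf_{k\ge n}Y_k,\qquad n\in\N .
\]
Each $W_n$ lies in $\LL^\infty(\Om,\FF)$, being a countable infimum of uniformly bounded $\FF$-measurable functions, and the sequence $(W_n)$ has three properties I would check in turn: first, $W_n\ge X_n$, because $Y_k\ge X_k\ge X_n$ for every $k\ge n$; second, $W_n\le Y_n$, so that \emph{by monotonicity of $\EE$ alone} $\EE(W_n)\le\EE(Y_n)\le\hat\EE(X_n)+2^{-n}$; and third, $(W_n)$ is nondecreasing, since the infimum is taken over a shrinking index set.

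Setting $W:=\sup_{n}W_n\in\LL^\infty(\Om,\FF)$, I have $W_n\nearrow W$, and since $W_n\ge X_n\nearrow X$ also $W\ge X$. Now I would invoke the hypothesis, namely the \emph{continuity from below of $\EE$ on $\LL^\infty(\Om,\FF)$}, applied to $W_n\nearrow W$, and pass to the limit in the estimate of the previous paragraph:
\[
\EE(W)=\lim_{n\to\infty}\EE(W_n)\le\lim_{n\to\infty}\big(\hat\EE(X_n)+2^{-n}\big)=a .
\]
As $W\in\LL^\infty(\Om,\FF)$ with $W\ge X$, the very definition of $\hat\EE$ gives $\hat\EE(X)\le\EE(W)\le a$, which is precisely the missing inequality.

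The only real obstacle I anticipate is the choice of the approximating sequence. The naive candidate $\bigvee_{k\le n}Y_k$ dominates $X_n$ and increases in $n$, but its $\EE$-value cannot be estimated for a merely monotone expectation; the tail infimum $\inf_{k\ge n}Y_k$ is instead engineered so that its value is pinned below $\EE(Y_n)$ purely by monotonicity, while it still dominates $X_n$ and increases with $n$. Once this construction is in place, continuity from below of $\EE$ closes the argument, and no additivity, convexity, or submodularity of $\EE$ is required.
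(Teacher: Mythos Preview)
Your proof is correct and follows essentially the same route as the paper: both arguments pick approximants $Y_n\ge X_n$ with $\EE(Y_n)$ close to $\hat\EE(X_n)$, pass to the tail infima $W_n=\inf_{k\ge n}Y_k$ to obtain an increasing $\FF$-measurable sequence dominating $X_n$ with controlled $\EE$-values, and then invoke continuity from below of $\EE$. The only cosmetic difference is that the paper uses a fixed $\ep>0$ (letting $\ep\searrow 0$ at the end) where you use $2^{-n}$.
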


\begin{proof}
Let $X\in \LL^\infty(\Om,2^\Om)$ and $(X_n)_{n\in \N}$ be a sequence in $\LL^\infty(\Om,2^\Om)$ with $X_n\nearrow X$. Fix $\ep>0$. Then, for every $n\in \N$, there exists an $X_0^n\in \LL^\infty(\Om,\FF)$ with $X_n\leq X_0^n\leq \|X\|_\infty$ and
 \[
  \EE(X_0^n)\leq \hat\EE(X_n)+\ep
 \]
for all $n\in \N$. Define $Y_n:=\inf_{k\geq n}X_0^k$. Then, $Y_n\in \LL^\infty(\Om,\FF)$ with $X_n\leq Y_n\leq Y_{n+1}\leq \|X\|_\infty$ and
\[
 \EE(Y_n)\leq \EE(X_0^n)\leq \hat\EE(X_n) + \ep
\]
for all $n\in \N$. As $X_n\leq Y_n\leq \|X\|_\infty$ for all $n\in \N$, we get that $Y:=\sup_{n\in \N}Y_n\in \LL^\infty(\Om,\FF)$ with
$
 X=\sup_{n\in \N}X_n\leq\sup_{n\in \N}Y_n=Y
$
 and $Y_n\nearrow Y$. Since $\EE$ is continuous from below, we obtain that
\[
 \hat\EE(X)\leq \EE(Y)=\lim_{n\to \infty}\EE(Y_n)\leq \lim_{n\to \infty}\hat\EE(X_n)+\ep.
\]
Letting $\ep\searrow 0$, we obtain that $\hat\EE(X)\leq \lim_{n\to \infty}\hat\EE(X_n)$ and therefore, $\hat\EE(X)=\lim_{n\to \infty}\hat\EE(X_n)$.
\end{proof}

Let $\FF\subset 2^\Om$ be a $\si$-algebra. For a convex expectation $\EE\colon \LL^\infty(\Om,\FF)\to \R$ which is continuous from below, the following example shows that in general, there exists not even one $\mu\in \ca_+^1(\Om,\FF)$ with $\EE^*(\mu)<\infty$. However, if $\EE$ is dominated by some reference measure $\nu\in \ca_+^1(\Om,\FF)$, i.e. $\EE(X)=\EE(Y)$ for all $X,Y\in\LL^\infty(\Om,\FF)$ with $X=Y$ $\nu$-almost surely, then $\EE$ can even be represented by probability measures which are absolutely continuous w.r.t. $\nu$ (cf. \cite{MR2779313}, Theorem 4.33).

\begin{example}\label{counterexample}
Let $\Om$ be a set of cardinality $|\Om|=\aleph_1$. Let
\[
 \AA:=\{A\in 2^\Om\colon |A|=\aleph_0\; {\rm or}\; |\Om\sm A|=\aleph_0\}
\]
and
\[
 \la(A):=\begin{cases}
          0, & |A|=\aleph_0,\\
          1, & |\Om\sm A|=\aleph_0
         \end{cases}
\]
for all $A\in \AA$. Then $(\Om,\AA,\la)$ is a probability space and by Proposition \ref{bacara} and Lemma \ref{extbelow}, $\hat\la\colon \LL^\infty(\Om,2^\Om)\to \R$ is a sublinear expectation which is continuous from below and extends $\la$. However, by a result due to Bierlein \cite[Satz 1C]{MR0149533}, there exists no $\mu\in \ca_+^1(\Om,2^\Om)$ with $\mu|_\AA=\la$. Hence, by Theorem \ref{2.7}, there exists no $\mu\in \ca_+^1(\Om,2^\Om)$ with $\hat\la^*(\mu)<\infty$. Assuming the continuum hypothesis, we may choose $\Om=[0,1]$ and $\la\colon \AA\to \R$ as the restriction of Lebesgue measure to $\AA$.
\end{example}

For $X,Y\in \LL^\infty(\Om,\FF)$ let $(X\wedge Y)(\om):=\min\{X(\om),Y(\om)\}$ and $(X\vee Y)(\om):=\max \{X(\om),Y(\om)\}$ for all $\om \in \Om$. For the remainder of this section we assume that the linear subspace
$M$ of $\LL^\infty(\Om,\FF)$ is a \textit{Riesz subspace}, i.e.~$M\wedge M=M$ or equivalently $M\vee M=M$. Here, $M\wedge M$ and $M\vee M$ are the sets of all $X\wedge Y$ and
$X\vee Y$ with $X,Y\in M$, respectively. Typical examples for Riesz subspaces of $\LL^\infty(\Om,\FF)$ are:
\begin{enumerate}
 \item[(i)] The space $\spn\{1_A\colon A\in \AA\}$ of all $\AA$-step functions, where $\AA\subset \FF$ is an algebra.
 \item[(ii)] The space $C_b(\Om)$ of all continuous bounded functions $\Om\to \R$, if $\Om$ is a toplogical space and $\FF$ is the Borel $\si$-algebra on $\Om$.
\end{enumerate}
Denote by $M_\si$ and $M_\de$ the set of all $X\in \LL^\infty(\Om,2^\Om)$ for which there exists a sequence $(X_n)_{n\in \N}\in M^{\mathbb N}$ with $X_n\nearrow X$ and $X_n\searrow X$, respectively.
In the sequel, we will use the following version of Choquet's capacibility theorem (cf.~\cite{MR0112844}, Th\'eor\`eme 1).
Let $\EE\colon \LL^\infty(\Om,2^\Om)\to \R$ be an expectation and $M$ a  Riesz subspace with $1\in M$.
If $\EE$ is continuous from below and $\EE|_{M_\de}$ is continuous from above, then for all $X\in \LL^\infty(\Om,\si(M))$ one has
\[
 \EE(X)=\sup\{\EE(X_0)\colon X_0\in M_\de, X_0\leq X\}.
\]
This follows from \cite[Extensions du th\'eor\`eme 1, 2)]{MR0112844} and the monotone class theorem (\cite{MR521810}, Chapter I, (22.3)).

By the Daniell-Stone theorem, for every linear pre-expectation $\mu\colon M\to\R$ which is continuous from above, there exists a unique expectation $\nu\in \ca_+^1(\Om,\si(M))$ which is continuous from above and extends $\mu$, i.e. $\mu X=\int X d\nu$ for all $X\in M$. However, in the sublinear case, a similar statement does not hold, as illustrated by the following example. For a convex version of the Daniell-Stone theorem and the respective representation results we refer to Cheridito et al.~\cite{CKT} and the references therein.

\begin{example}
Let $\Om:=[0,1]$ and $\EE(X):=\max_{\om\in \Om} X(\om)$ for all $X\in M:=C(\Om)$. By Dini's lemma $\EE\colon M\to \R$ is continuous from above, and thus has the representation
\[
\EE(X)=\max_{\mu\in \ca_+^1([0,1],\FF)} \mu X \quad \mbox{for all }X\in M,
\]
where $\FF$ denotes the Borel $\sigma$-algebra on $[0,1]$. Notice that $\ca_+^1(\Om,\FF)$ is compact in $\ca(\Om,\FF)=C(\Om)'$ or equivalently tight by Prokhorov's theorem, however it is not compact in $\ba(\Om,\FF)=\LL^\infty(\Om,\FF)'$.
Suppose there existed an expectation $\tilde\EE:\LL^\infty(\Om,\FF)\to\R$ which extends $\EE$ and is continuous from above. Approximating the upper semicontinuous indicator function $1_{\{\om\}}$ with continuous functions from above implies $\tilde\EE( 1_{\{\om\}})\geq 1$ for all $\om\in \Om$. Hence, for every sequence $(A_n)_{n\in \N}\subset \FF$ with $A_n\neq\emptyset$ and $ 1_{A_n}\searrow 0$, one has $\tilde\EE( 1_{A_n})\geq 1$ for all $n\in\N$ and $\tilde\EE(0)=0$.
\end{example}

The main theorem of this section, Theorem \ref{robustcara}, states that for every convex pre-expectation $\EE\colon M\to \R$ which is continuous from above, there exists exactly one expectation $\bar\EE\colon \LL^\infty(\Om,\si(M))\to \R$ which is continuous from below on $\LL^\infty(\Om,\si(M))$ and continuous from above on $M_\de$. Moreover, $\bar\EE$ is convex and admits a representation in terms of probablity measures on $(\Om,\si(M))$.

We start by extending a pre-expectation $\EE\colon M\to \R$ which is continuous from above to a pre-expectation $\EE\colon M_\de\to \R$ which is continuous from above. For related results in the context of robust pricing and hedging in financial markets we refer to Cheridito et al.~\cite{CKT2}.

\begin{lemma}\label{edelta}
 Let $\EE\colon M\to \R$ be a pre-expectation which is continuous from above. Then, there exists a unique pre-expectation $\EE_\de\colon M_\de\to \R$ which is continuous from above and extends $\EE$. Moreover, 
 $\EE_\de=\hat\EE|_{M_\de}$, i.e.~$\EE_\de$ is the largest pre-expectation $\tilde\EE\colon M_\de\to \R$ with $\tilde\EE|_M=\EE$.
\end{lemma}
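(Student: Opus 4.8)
The plan is to set $\EE_\de := \hat\EE|_{M_\de}$, where $\hat\EE$ is the maximal extension from Proposition \ref{bacara} (its defining formula $\hat\EE(X) = \inf\{\EE(X_0)\colon X_0 \in M,\ X_0 \ge X\}$ makes sense for every bounded function, in particular on $M_\de \subset \LL^\infty(\Om, 2^\Om)$), and to verify the assertions in turn. Since $M \subset M_\de$ (take constant sequences) and $\hat\EE|_M = \EE$, the functional $\EE_\de$ extends $\EE$; monotonicity, constant-preservation and finiteness are inherited verbatim from Proposition \ref{bacara}. The maximality claim is immediate and purely order-theoretic: if $\tilde\EE \colon M_\de \to \R$ is any pre-expectation with $\tilde\EE|_M = \EE$, then for $X \in M_\de$ and every $X_0 \in M$ with $X_0 \ge X$ monotonicity gives $\tilde\EE(X) \le \tilde\EE(X_0) = \EE(X_0)$, and taking the infimum over such $X_0$ yields $\tilde\EE \le \hat\EE|_{M_\de} = \EE_\de$. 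Thus the real content is that $\EE_\de$ is continuous from above and that this property determines it uniquely.

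The decisive auxiliary step I would establish first is the representation
\[
 \hat\EE(X) = \inf_{n} \EE(X_n) = \lim_{n} \EE(X_n)
 \qquad \text{whenever } X_n \searrow X,\ X_n \in M,\ X \in M_\de .
\]
The inequality $\hat\EE(X) \le \inf_n \EE(X_n)$ is trivial, since each $X_n$ dominates $X$. For the reverse I fix an arbitrary $X_0 \in M$ with $X_0 \ge X$ and exploit the Riesz property $M \vee M = M$: the sequence $X_n \vee X_0$ lies in $M$, is decreasing, and converges to $X \vee X_0 = X_0 \in M$, so continuity from above of $\EE$ \emph{on $M$} applies and gives $\EE(X_n \vee X_0) \searrow \EE(X_0)$. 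As $\EE(X_n) \le \EE(X_n \vee X_0)$, we get $\inf_n \EE(X_n) \le \EE(X_0)$, and the infimum over admissible $X_0$ yields $\inf_n \EE(X_n) \le \hat\EE(X)$. Using $\vee$ rather than $\wedge$ is exactly what keeps the limit inside $M$ so that continuity from above is applicable; this is the technical heart of the argument.

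With this formula in hand I would prove continuity from above of $\EE_\de$ on $M_\de$ by diagonalisation. Given $Y^{(k)} \searrow Y$ with $Y^{(k)} \in M_\de$, choose for each $k$ a sequence $Z^{(k)}_n \searrow Y^{(k)}$ in $M$ and set $W_n := \min_{k \le n} Z^{(k)}_n \in M$. One checks that $W_n$ is decreasing, that $W_n \ge Y^{(n)} \ge Y$ (because $Z^{(k)}_n \ge Y^{(k)} \ge Y^{(n)}$ for $k \le n$), and that $W_n \le Z^{(k)}_n \to Y^{(k)}$ forces $\lim_n W_n \le \inf_k Y^{(k)} = Y$; hence $W_n \searrow Y$, which incidentally shows $Y \in M_\de$. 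The representation then gives $\hat\EE(Y) = \lim_n \EE(W_n)$, while $W_n \ge Y^{(n)}$ together with $W_n \in M$ gives $\EE(W_n) \ge \hat\EE(Y^{(n)})$, so $\hat\EE(Y) \ge \lim_n \hat\EE(Y^{(n)})$; the opposite inequality is immediate from monotonicity, and so $\hat\EE(Y^{(k)}) \searrow \hat\EE(Y)$. I expect this diagonal construction — producing one decreasing $M$-sequence that simultaneously squeezes down to $Y$ and dominates each $Y^{(n)}$ — to be the main obstacle, since the naive $\wedge$-diagonal decreases only to some limit possibly strictly above $Y$, which would break the bound $\EE(W_n)\ge\hat\EE(Y^{(n)})$. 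Finally, uniqueness follows by combining the two ingredients: if $\tilde\EE$ is any continuous-from-above extension, then applying continuity from above along $X_n \searrow X$ gives $\tilde\EE(X) = \lim_n \EE(X_n) = \hat\EE(X)$ by the representation, whence $\tilde\EE = \EE_\de$.
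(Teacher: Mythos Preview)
Your proof is correct and follows essentially the same approach as the paper: the $\vee$-trick to identify $\hat\EE(X)$ with $\lim_n \EE(X_n)$ along any approximating sequence, and the diagonal $W_n = Z^{(1)}_n \wedge \dots \wedge Z^{(n)}_n$ to establish continuity from above on $M_\de$, are exactly the two ingredients the paper uses. The only difference is organisational: the paper first \emph{defines} $\EE_\de(X):=\lim_n \EE(X_n)$, proves well-definedness via the $\vee$-argument, and only afterwards identifies $\EE_\de$ with $\hat\EE|_{M_\de}$; your route of setting $\EE_\de:=\hat\EE|_{M_\de}$ from the outset and then deriving the limit formula is slightly more economical, since well-definedness comes for free.
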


\begin{proof}
Let $X\in M_\de$ and $(X_n)_{n\in \N}\in M^{\mathbb N}$ with $X_n\searrow X$. Then, $\EE(X_n)\geq -\|X\|_\infty$
for all $n\in \N$. Define
\[
 \EE_\de(X):=\lim_{n\to \infty}\EE(X_n)>-\infty.
\]
First, we show that $\EE_\de(X)$ is independent of the sequence $(X_n)_{n\in \N}\in M^{\mathbb N}$. Let $(Y_n)_{n\in \N}\in M^{\mathbb N}$ with $Y_n\searrow X$. Then, $Z_n^k:=X_n\vee Y_k\in M$ for all $k,n\in \N$ and $Z_n^k\searrow Y_k$ as $n\to \infty$ for all $k\in \N$. Hence, as $\EE$ is continuous from above, we get that
\[
 \EE(Y_k)=\lim_{n\to \infty}\EE(Z_n^k)\geq \lim_{n\to \infty}\EE(X_n)
\]
for all $k\in \N$. Thus, $\lim_{n\to \infty}\EE(Y_n)\geq \lim_{n\to \infty}\EE(X_n)$ and therefore
$\lim_{n\to \infty}\EE(Y_n)=\lim_{n\to \infty}\EE(X_n)$ by symmetry, which shows that $\EE_\de$ is well-defined. Clearly, $\EE_\de$ defines a pre-expectation on $M_\de$ with $\EE_\de|_M=\EE$.

Now, let $X\in M_\de$ and $(X_n)_{n\in \N}\in M^{\mathbb N}_\de$ with $X_n\searrow X$. For all $n\in \N$ let $(X_n^k)_{k\in \N}\in M^{\mathbb N}$ with $X_n^k\searrow X_n$ as $k\to \infty$. Define $Y_n:=X_1^n\wedge\ldots\wedge X_n^n$ for all $n\in \N$. Then, as $M$ is directed downwards, we have that $Y_n\in M$ with $Y_n\geq Y_{n+1}$ for all $n\in \N$. Moreover, $Y_n\geq X_n$ for all $n\in \N$ and $X_n^k\geq Y_n$ for all $k,n\in \N$ with $k\leq n$. Hence,
\[
X_m= \lim_{k\to \infty}X_m^k\geq \lim_{n\to \infty}Y_n\geq \lim_{n\to \infty}X_n=X
\]
for all $m\in \N$. Altogether, $Y_n\searrow X$ with $Y_n\geq X_n$ for all $n\in \N$ and therefore
\[
 \EE_\de(X)=\lim_{n\to \infty}\EE(Y_n)\geq \lim_{n\to \infty}\EE_\de(X_n).
\]
As $\EE_\de(X_n)\geq \EE_\de(X)$ for all $n\in \N$, we obtain that $\EE_\de(X)=\lim_{n\to \infty}\EE_\de(X_n)$.

We have that $\hat\EE(X)\geq \EE_\de(X)$ for all $X\in M_\de$ as $\hat\EE$ is the largest expectation which extends $\EE$. Let $(X_n)_{n\in \N}\in M^{\mathbb N}$ with $X_n\searrow X$, so that
$\hat\EE(X)\geq \EE_\de(X)=\lim_{n\to \infty}\EE(X_n)=\lim_{n\to \infty}\hat\EE(X_n)\geq \hat\EE(X)$.
\end{proof}

\begin{theorem}\label{robustcara}
Suppose that $\FF=\sigma(M)$. For a convex expectation $\EE\colon M\to \R$ which is continuous from above define
\[
  \bar\EE(X):=\sup\left\{\inf_{n\in \N} \EE(X_n)\colon (X_n)_{n\in \N}\in M^{\mathbb N}, X_n\geq X_{n+1}\, (n\in \N), X\geq \inf_{n\in \N}X_n\right\}
 \]
 for all $X\in \LL^\infty(\Om,\FF)$. Then, $\bar\EE$ is the only expectation which is continuous from below on $\LL^\infty(\Om,\FF)$, continuous from above on $M_\de$ and extends $\EE$. Moreover, $\bar\EE$ is convex with the dual representation
\[
 \bar\EE(X)=\sup_{\substack{\mu\in M'\\ \EE^*(\mu)<\infty}}\left(\bar\mu(X)-\EE^*(\mu)\right)=\sup_{\nu\in \ca_+^1(\Omega,\FF)}\left(\nu X-\EE^*(\nu|_M)\right)
\]
 for all $X\in \LL^\infty(\Om,\FF)$. In particular, $\big(\Om,\FF,\bar\EE\big)$ is a convex expectation space.
 \end{theorem}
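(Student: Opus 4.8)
The plan is to construct $\bar\EE$ from the continuous-from-above extension $\EE_\de$ of Lemma~\ref{edelta} and then to upgrade the finitely additive representation of Theorem~\ref{2.7} to a $\sigma$-additive one by combining Fan's minimax theorem with the Daniell--Stone theorem. \textbf{Reformulation and elementary properties.} First I would rewrite the defining supremum through $\EE_\de$. For a decreasing sequence $(X_n)\in M^{\mathbb N}$ its pointwise limit $Y:=\inf_n X_n$ lies in $M_\de$, and by Lemma~\ref{edelta} one has $\inf_n\EE(X_n)=\EE_\de(Y)$ irrespective of the sequence; since every element of $M_\de$ arises in this way,
\[
\bar\EE(X)=\sup\{\EE_\de(Y)\colon Y\in M_\de,\ Y\leq X\}\quad\text{for all }X\in\LL^\infty(\Om,\FF).
\]
Monotonicity and $\bar\EE(\al)=\al$ are then immediate, the bound $|\bar\EE(X)|\leq\|X\|_\infty$ gives finiteness, and monotonicity of $\EE_\de$ yields $\bar\EE|_{M_\de}=\EE_\de$. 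In particular $\bar\EE$ extends $\EE$ and, by Lemma~\ref{edelta}, is continuous from above on $M_\de$.

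\textbf{Dual representation.} This is the heart of the matter. Fix $Y\in M_\de$ and $X_n\searrow Y$ in $M$, and put $c:=\|X_1\|_\infty-\EE_\de(Y)$; since $\|X_n\|_\infty-\EE(X_n)\leq c$, Lemma~\ref{compactness} shows that each $\EE(X_n)$ is a maximum over the weak*-compact convex set $E:=\{\mu\in M'\colon\EE^*(\mu)\leq c\}$, whose elements are positive linear pre-expectations. Writing $\EE_\de(Y)=\inf_n\max_{\mu\in E}(\mu X_n-\EE^*(\mu))$ and applying Fan's minimax theorem exactly as in the proof of Lemma~\ref{contabove} (the integrand is weak*-upper semicontinuous and concave in $\mu$, and convex on $F=\mathbb N$ because $(X_n)$ decreases and the $\mu\in E$ are positive) I would obtain
\[
\EE_\de(Y)=\max_{\EE^*(\mu)<\infty}\big(\mu_\de(Y)-\EE^*(\mu)\big),\qquad \mu_\de(Y):=\inf_n\mu X_n .
\]
Interchanging the two suprema in the reformulation then gives $\bar\EE(X)=\sup_{\EE^*(\mu)<\infty}(\bar\mu(X)-\EE^*(\mu))$, where $\bar\mu(X):=\sup\{\mu_\de(Y)\colon Y\in M_\de,\ Y\leq X\}$. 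To identify $\bar\mu$, note that $\EE^*(\mu)<\infty$ forces $\mu$ to be continuous from above (Lemma~\ref{contabove}), so the Daniell--Stone theorem furnishes a unique $\nu_\mu\in\ca_+^1(\Om,\si(M))$ with $\nu_\mu|_M=\mu$; its integral functional is continuous from below by monotone convergence and agrees with $\mu_\de$ on $M_\de$, so applying Choquet's capacibility theorem to the maximal extension $\hat{\nu_\mu}$ (continuous from below on $\LL^\infty(\Om,2^\Om)$ by Lemma~\ref{extbelow}) yields $\bar\mu(X)=\nu_\mu X$. Both displayed forms of the representation follow, the passage to $\sup_{\nu\in\ca_+^1}$ using that any $\nu\in\ca_+^1$ with $\EE^*(\nu|_M)<\infty$ coincides, by uniqueness in Daniell--Stone, with the extension of $\nu|_M$.

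\textbf{Convexity, continuity from below, uniqueness.} Being a supremum of the affine maps $X\mapsto\nu X-\EE^*(\nu|_M)$, $\bar\EE$ is convex; and if $X_n\nearrow X$ then $\nu X_n\nearrow\nu X$ for each $\nu\in\ca_+^1$ by monotone convergence, so interchanging suprema gives $\bar\EE(X)=\sup_n\bar\EE(X_n)=\lim_n\bar\EE(X_n)$, i.e.\ continuity from below. The identity $\bar\EE^*(\nu)=\EE^*(\nu|_M)$ for $\nu\in\ca_+^1$ (one inequality from the representation, the other from $\bar\EE|_M=\EE$) shows that $\PP:=\ca_+^1(\Om,\FF)$ witnesses $(\Om,\FF,\bar\EE)$ as a convex expectation space. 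For uniqueness, let $\EE'$ be another expectation that is continuous from below, continuous from above on $M_\de$ and extends $\EE$; its restriction to $M_\de$ is then a continuous-from-above pre-expectation extending $\EE$, hence equals $\EE_\de$ by Lemma~\ref{edelta}, and applying Choquet's capacibility theorem to the maximal extension $\hat{\EE'}$ (again continuous from below by Lemma~\ref{extbelow}) gives $\EE'(X)=\sup\{\EE_\de(Y)\colon Y\in M_\de,\ Y\leq X\}=\bar\EE(X)$ for all $X\in\LL^\infty(\Om,\si(M))$.

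\textbf{Main obstacle.} I expect the representation step to be the crux: one must set up Fan's minimax theorem on $M_\de$ with a \emph{single} compact set $E$ serving all $X_n$ at once and verify the convexity/concavity hypotheses, and then identify the purely order-theoretic lower extension $\bar\mu$ with honest integration $\nu_\mu$ against the Daniell--Stone measure. This identification, routed through Lemma~\ref{extbelow} and Choquet's theorem, is precisely what turns the $\ba$-representation of Theorem~\ref{2.7} into the desired $\ca$-representation.
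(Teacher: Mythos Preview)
Your argument is correct and uses exactly the same ingredients as the paper---Lemma~\ref{edelta}, Fan's minimax on a single compact level set of $\EE^*$, the Daniell--Stone theorem, Lemma~\ref{extbelow}, and Choquet's capacibility theorem---but the organization is inverted. The paper \emph{defines} $\tilde\EE(X):=\sup_{\nu\in\ca_+^1}(\nu X-\EE^*(\nu|_M))$ first, so convexity and continuity from below are immediate from the formula; it then uses Fan's minimax once to show $\tilde\EE$ is continuous from above on $M_\de$, and a single application of Choquet (via Lemma~\ref{extbelow} and Lemma~\ref{edelta}) to conclude $\tilde\EE=\bar\EE$. You instead work from the primal definition of $\bar\EE$, use Fan's minimax to obtain a dual formula for $\EE_\de$, and then need Choquet a second time---once for each $\mu$ to identify your $\bar\mu$ with the Daniell--Stone measure $\nu_\mu$, and once more for uniqueness. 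Both routes are valid; the paper's is slightly more economical because bundling all $\mu$ into $\tilde\EE$ up front collapses your per-$\mu$ identification step into the single equality $\tilde\EE=\bar\EE$, whereas your route has the pedagogical advantage of making explicit how the $\ba$-representation of Theorem~\ref{2.7} is upgraded term by term to a $\ca$-representation.
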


\begin{proof}
Let $\tilde\EE\colon \LL^\infty(\Om,\FF)\to\mathbb{R}$ be given by
\[
 \tilde\EE(X):=\sup_{\nu\in \ca_+^1(\Om,\FF)}\big(\nu X -\EE^*(\nu|_M)\big)\quad\mbox{for all }X\in \LL^\infty(\Om,\FF).
\]
 By the theorem of Daniell-Stone, it follows  that $\tilde\EE$ is a convex expectation which is continuous from below and extends $\EE$. Moreover, $\tilde\EE$ is continuous from above on $M_\de$. Indeed, let  $(X_n)_{n\in \N}\in M$
 with $X_n\searrow X$ for some $X\in M_\delta$. Define the convex compact set
$\QQ:=\{\mu\in M'\, :\, \EE^*(\mu)\leq \|X_1\|_\infty+\|X\|_\infty\}$
and the mapping
\[
f\colon \QQ\times \N\to \R,\; (\mu, n)\mapsto \mu X_n-\EE^*(\mu),
\]
which is concave on $\QQ$ and convex on $\N$ in the sense of \cite{MR0055678}. Moreover, $f(\, \cdot \, , n)$ is upper semicontinuous for all $n\in \N$.
By Lemma \ref{compactness}, Fan's minimax theorem and the Daniell-Stone theorem, we obtain that
\begin{align*}
\inf_{n\in \N}\tilde\EE(X_n)&=\inf_{n\in \N}\max_{\mu\in \QQ}\big(\mu X_n-\EE^*(\mu)\big)=\max_{\mu\in \QQ}\inf_{n\in \N}\big(\mu X_n-\EE^*(\mu)\big)\\
&\leq \max_{\nu\in \ca_+^1(\Om,\FF)}\inf_{n\in \N}\big(\nu X_n-\EE^*(\nu|_M)\big)=\max_{\nu\in \ca_+^1(\Om,\FF)}\big(\nu X-\EE^*(\nu|_M)\big)\\
&=\tilde\EE(X).
\end{align*}
Hence $\tilde\EE(X)=\inf_{n\in \N}\tilde\EE(X_n)$, so that $\tilde\EE$ is continuous from above on $M_\de$ by Lemma \ref{edelta}.
The claim then follows from Theorem \ref{2.7}, Lemma \ref{edelta} and Choquet's capacibility theorem.
\end{proof}

Although $\bar\EE$ is the only expectation which is continuous from below on $\LL^\infty(\Om,\FF)$, continuous from above on $M_\de$ and extends $\EE$, there may exist infinitely many expectations which extend $\EE$ and are continuous from below as the following example shows.

\begin{example}
 Let $\Om:=[0,1]$, $\FF$ be the Borel $\si$-algebra on $[0,1]$ and
 \[
  \EE(X):=\max_{\om\in \Om}X(\om)=\max_{\mu\in \ca_+^1(\Om,\FF)}\mu X=\max_{\om\in \Om}\de_\om X
 \]
 for all $X\in M:=C([0,1])$, where $\de_\om\in \ca_+^1(\Om,\FF)$ is the Dirac measure $\de_\om(A):=1_A(\om)$ for $\om \in \Om$ and $A\in \FF$.
 Then, $\bar\EE\colon \LL^\infty(\Om,\FF)\to \R$ is given by
\[
 \bar\EE(X)=\sup_{\mu\in \ca_+^1(\Om,\FF)}\mu X=\sup_{\om\in \Om}\de_\om X=\sup_{\om\in \Om}X(\om)\quad \mbox{for all }X\in \LL^\infty(\Om,\FF).
\]
On the other hand, for every $\om_0\in [0,1]$, we have that $\EE_0\colon \LL^\infty(\Om,\FF)\to \R$, given by
\[
 \EE_0(X):=\sup_{\om\in \Om\sm\{\om_0\}}X(\om)=\sup_{\om\in \Om\sm\{\om_0\}}\de_\om X,
\]
is an expectation which extends $\EE$ and is continuous from below.
\end{example}

\begin{corollary}
For a convex pre-expectation $\EE\colon M\to \R$ which is continuous from above, let $\tilde\EE\colon \LL^\infty(\Om,\si(M))\to \R$ be an expectation which is continuous from below and extends $\EE$. Then,
 \begin{enumerate}
  \item[(i)] $\tilde\EE(X)\leq \bar\EE(X)$ for all $X\in \LL^\infty(\Om,\si(M))$,
  \item[(ii)] $\tilde\EE=\bar\EE$ if and only if $\tilde\EE^*(\nu)=\EE^*(\nu|_M)$ for all $\nu\in \ca_+^1(\Om,\si(M))$.
 \end{enumerate}
\end{corollary}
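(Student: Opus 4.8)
For part (i), the plan is to avoid comparing $\tilde\EE$ and $\bar\EE$ directly (a naive attempt to bound $\tilde\EE(X)$ from above by approximating $X$ from above fails, since $\tilde\EE$ is not assumed continuous from above on $M_\de$), and instead to realize $\bar\EE$ as a pointwise maximum and invoke the uniqueness in Theorem~\ref{robustcara}. Concretely, set
\[
V(X):=\max\{\tilde\EE(X),\bar\EE(X)\}\qquad(X\in\LL^\infty(\Om,\si(M))).
\]
First I would check that $V$ is again an expectation extending $\EE$: monotonicity and the constant-preserving property pass to a pointwise maximum, and on $M$ both $\tilde\EE$ and $\bar\EE$ coincide with $\EE$, so $V|_M=\EE$. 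Continuity from below of $V$ is equally formal: for $X_n\nearrow X$ one has $\tilde\EE(X_n)\nearrow\tilde\EE(X)$ and $\bar\EE(X_n)\nearrow\bar\EE(X)$, and the maximum of two increasing convergent sequences increases to the maximum of the limits.

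The one step that genuinely needs care is continuity from above of $V$ on $M_\de$. For $Y\in M_\de$ choose $(Y_n)\in M^{\mathbb N}$ with $Y_n\searrow Y$; monotonicity of $\tilde\EE$ together with $\tilde\EE|_M=\EE$ gives $\tilde\EE(Y)\le\tilde\EE(Y_n)=\EE(Y_n)$, hence $\tilde\EE(Y)\le\lim_n\EE(Y_n)=\EE_\de(Y)$ in the notation of Lemma~\ref{edelta}. Since $\bar\EE$ is continuous from above on $M_\de$ and extends $\EE$, Lemma~\ref{edelta} forces $\bar\EE|_{M_\de}=\EE_\de$, so $\tilde\EE(Y)\le\bar\EE(Y)$ and therefore $V(Y)=\bar\EE(Y)$ for every $Y\in M_\de$. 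Thus $V$ agrees with $\bar\EE$ on $M_\de$ and inherits continuity from above there. By the uniqueness assertion of Theorem~\ref{robustcara}, $V=\bar\EE$; since $\tilde\EE\le V$ by construction, this is precisely (i). Note that no convexity of $\tilde\EE$ is used, consistent with the hypothesis that $\tilde\EE$ is merely an expectation.

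For (ii) I would first record the identity $\bar\EE^*(\nu)=\EE^*(\nu|_M)$ for every $\nu\in\ca_+^1(\Om,\si(M))$. The inequality ``$\ge$'' is immediate by restricting the supremum defining $\bar\EE^*$ to $M$, where $\bar\EE=\EE$; for ``$\le$'', the dual representation $\bar\EE(X)=\sup_{\mu\in\ca_+^1(\Om,\si(M))}\big(\mu X-\EE^*(\mu|_M)\big)$ from Theorem~\ref{robustcara} gives $\nu X-\bar\EE(X)\le\EE^*(\nu|_M)$ for all $X$, whence $\bar\EE^*(\nu)\le\EE^*(\nu|_M)$. The forward implication of (ii) is then trivial: $\tilde\EE=\bar\EE$ yields $\tilde\EE^*=\bar\EE^*$, which on $\ca_+^1(\Om,\si(M))$ equals $\EE^*(\,\cdot\,|_M)$.

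For the converse, assume $\tilde\EE^*(\nu)=\EE^*(\nu|_M)$ for all $\nu\in\ca_+^1(\Om,\si(M))$. Combining the representation of Theorem~\ref{robustcara} with this hypothesis and the Fenchel--Young inequality,
\[
\bar\EE(X)=\sup_{\nu\in\ca_+^1(\Om,\si(M))}\big(\nu X-\EE^*(\nu|_M)\big)=\sup_{\nu\in\ca_+^1(\Om,\si(M))}\big(\nu X-\tilde\EE^*(\nu)\big)\le \tilde\EE^{**}(X)\le\tilde\EE(X),
\]
where the first inequality uses $\ca_+^1(\Om,\si(M))\subset\ba(\Om,\si(M))$. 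Together with (i) this gives $\tilde\EE=\bar\EE$. The only non-formal point in the entire argument is the verification in (i) that the auxiliary expectation $V=\tilde\EE\vee\bar\EE$ is continuous from above on $M_\de$; once the maximum construction is in place, everything reduces to the uniqueness and the dual representation of Theorem~\ref{robustcara} and the elementary biconjugate inequality, so I do not anticipate a serious obstacle beyond this bookkeeping.
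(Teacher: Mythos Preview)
Your proof is correct and follows essentially the same strategy as the paper: for (i) you form $\tilde\EE\vee\bar\EE$ and invoke the uniqueness part of Theorem~\ref{robustcara}, and for (ii) you establish $\bar\EE^*(\nu)=\EE^*(\nu|_M)$ and then close with the Fenchel--Young inequality together with (i). Your derivation of the inequality $\bar\EE^*(\nu)\le\EE^*(\nu|_M)$ directly from the dual representation of $\bar\EE$ is in fact slightly cleaner than the paper's, which instead appeals to the explicit approximation formula for $\bar\EE$ via decreasing sequences in $M$.
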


\begin{proof}
 (i) $\tilde\EE\vee\bar\EE$ is an expectation which is continuous from below. Moreover, as $\bar\EE|_{M_\de}=\hat\EE|_{M_\de}$ by Lemma \ref{edelta}, we get that $(\tilde\EE\vee\bar\EE)|_{M_\de}=\bar\EE|_{M_\de}$
 is continuous from above. Hence,  by Theorem \ref{robustcara}, we get that $\tilde\EE\vee\bar\EE=\bar\EE$.\\
(ii) First, we show that $\bar\EE^*(\nu)=\EE^*(\nu|_M)$ for all $\nu\in \ca_+^1(\Om,\si(M))$. Clearly, $\EE^*(\nu|_M)\leq (\bar\EE)^*(\nu)$ for all $\nu\in \ca_+^1(\Om,\si(M))$.
To show the converse inequality, let $\nu\in \ca_+^1(\Om,\FF)$ with $\EE^*(\nu|_M)<\infty$, $X\in \LL^\infty(\Om,\FF)$ and $\ep>0$. By Theorem \ref{robustcara} it follows that $\nu=\overline{(\nu|_M)}$.
Hence, there exists a sequence $(X_n)_{n\in \N}\in M^{\mathbb N}$ with $X_n\geq X_{n+1}$ for all $n\in \N$, $X\geq \inf_{n\in \N}X_n$ and $\nu X \leq \inf_{n\in \N}\nu X_n+\ep$. Further, there exists an $n_0\in \N$ such that $\EE(X_{n_0})-\ep\leq \inf_{n\in \N}\EE(X_n)$.
 Thus, we obtain that
 \begin{align*}
  \nu X-\bar\EE(X)&\leq \inf_{n\in \N}\nu X_n+\ep-\inf_{n\in \N}\bar\EE(X_n)\leq\nu X_{n_0} -\EE(X_{n_0})+2\ep\\
  &\leq \EE^*(\nu|_M)+2\ep.
 \end{align*}
Letting $\ep\searrow 0$, we get that $\nu X-\bar\EE(X)\leq \EE^*(\nu|_M)$ for all $X\in \LL^\infty(\Om,\FF)$.
Hence, if  $\tilde\EE=\bar\EE$ we get that $\tilde\EE^*(\nu)=\bar\EE^*(\nu)=\EE^*(\nu|_M)$ for all $\nu\in \ca_+^1(\Om,\si(M))$.

Now, assume that $\tilde\EE^*(\mu)=\EE^*(\mu|_M)$ for all $\mu\in \ca_+^1(\Om,\si(M))$. By (i) we have that
\[
 \tilde\EE(X)\leq \bar\EE(X)=\sup_{\mu\in \ca_+^1(\Om,\si(M))}(\mu X-\tilde\EE^*(\mu))\leq \tilde\EE(X)
\]
for all $X\in \LL^\infty(\Om,\si(M))$, which shows that $\tilde\EE=\bar\EE$.
\end{proof}


\section{A robust version of Kolmogorov's extension theorem}\label{sec4}
In this section, we apply the previous results to a Kolmogorov type setting. That is, given a family of finite dimensional marginal expectations, we want to find an expectation
with these marginals. Again, we will distinguish between the finitely additive case and the countably additive case. Finally, we will state a robust version of Kolmogorov's extension theorem.

Throughout, let $I\neq \es$ be an index set, $\HH:=\{J\subset I\colon \# J\in \N\}$ the set of all finite nonempty subsets of $I$ and $S$ a Polish space with the Borel $\si$-algebra $\BB$.
For each $J\in \HH$ let $M_J\subset \LL^\infty(S^J,\BB^J)$ be a linear subspace with $1\in M_J$, where $\BB^J$ is the product $\si$-algebra on $S^J$.
As before, $M_J$ is always endowed with the $\|\cdot\|_\infty$-norm and on $(M_J)'$ we consider the weak$^*$-topology.
Throughout this section, we assume that
\[M_K\circ \pr_{JK}:=\left\{f\circ \pr_{JK}\colon f\in M_K\right\}\subset M_J\]
for all $J,K\in \HH$ with $K\subset J$ where
$\pr_{JK}\colon S^J\to S^K$, $(x_i)_{i\in J}\mapsto (x_i)_{i\in K}$ and $\pr_J:=\pr_{IJ}$.
For $\mu_J\in (M_J)'$ we define
\[
 (\mu_J\circ \pr_{JK}^{-1})f:=\mu_J(f\circ \pr_{JK})\quad \mbox{for all }f\in M_K
 \]
so that \[(M_J)'\circ\pr_{JK}^{-1}:=\{\mu_J \circ \pr_{JK}^{-1}\colon \mu_J\in (M_J)'\}\subset (M_K)'.\]
Notice that the linear mapping $(M_J)'\to (M_K)',\;\mu_J \mapsto \mu_J\circ \pr_{JK}^{-1}$ is continuous.

\begin{remark}
 Typical examples for the family $(M_J)_{J\in \HH}$ are:
 \begin{enumerate}
  \item[(i)] the space $\LL^\infty(S^J):=\LL^\infty(S^J,\BB^J)$ of all bounded $\BB^J$-$\BB(\R)$-measurable functions, where $\BB^J$ denotes the product $\si$-algebra on $S^J$,
  \item[(ii)] the space $M_J:=C_b(S^J)$ of all continuous bounded functions $S^J\to \R$, where $S^J$ is endowed with the product topology.
 \end{enumerate}
\end{remark}

In \cite{MR2143645}, Peng defines a consistency condition for nonlinear expectations and proves an extension to the subspace
\[
 M:=\{f\circ \pr_J\colon J\in \HH,f\in \LL^\infty(S^J,\BB^J)\}
\]
of $\LL^\infty(S^I,\BB^I)$. We will use the same notion of consistency as Peng and apply the extension results from the previous sections to obtain an extension to $\LL^\infty(S^I,\BB^I)$.

\begin{definition}
For all $J\in \HH$ let $\EE_J\colon M_J\to \R$ be a pre-expectation. Then the family $(\EE_J)_{J\in \HH}$ is
\textit{consistent} if for all $J,K\in \HH$ with $K\subset J$
\[
\EE_K(f)=\EE_J(f\circ \pr_{JK})\quad \mbox{for all }f\in M_K.
\]
A family $(\QQ_J)_{J\in \HH}$ of subsets $\QQ_J\subset (M_J)'$ is
\textit{consistent} if for all $J,K\in \HH$ with $K\subset J$
\[
\QQ_J\circ \pr_{JK}^{-1}=\QQ_K.
\]
\end{definition}

\begin{lemma}\label{equivalence}
 For every $J\in \HH$ let $\EE_J\colon M_J\to \R$ be a sublinear pre-expectation and
 \[
  \QQ_J:=\{\mu_J\in (M_J)'\colon \mu_J f\leq \EE_J(f)\mbox{ for all } f\in M_J \}.
  \]
Then, the family $(\EE_J)_{J\in \HH}$ is consistent if and only if the family $(\QQ_J)_{J\in \HH}$ is consistent.
\end{lemma}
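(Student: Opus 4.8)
The plan is to exploit the duality between a sublinear pre-expectation and its representing set, which is supplied by Lemma \ref{compactness}. First I would record that $\QQ_J$ coincides with $\{\mu_J\in(M_J)'\colon\EE_J^*(\mu_J)=0\}$: since $\EE_J^*(\mu_J)=\sup_{f\in M_J}(\mu_J f-\EE_J(f))\geq\mu_J 0-\EE_J(0)=0$, the condition $\EE_J^*(\mu_J)=0$ is equivalent to $\mu_J f\leq\EE_J(f)$ for all $f\in M_J$, which is exactly the defining condition of $\QQ_J$. By the sublinear part of Lemma \ref{compactness} this yields the two-sided correspondence
\[
\EE_J(f)=\max_{\mu_J\in\QQ_J}\mu_J f\qquad\mbox{for all }f\in M_J,
\]
so that each $\EE_J$ is the support functional of $\QQ_J$ while each $\QQ_J$ is recovered from $\EE_J$. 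This correspondence is what I would use in both directions.

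For the implication that consistency of $(\QQ_J)_{J\in\HH}$ forces consistency of $(\EE_J)_{J\in\HH}$, I would fix $K\subset J$ in $\HH$ and $f\in M_K$ and simply substitute. Using $\QQ_K=\QQ_J\circ\pr_{JK}^{-1}$, the definition $(\mu_J\circ\pr_{JK}^{-1})f=\mu_J(f\circ\pr_{JK})$, and the representation above applied in $M_J$ (noting $f\circ\pr_{JK}\in M_J$ by the standing hypothesis $M_K\circ\pr_{JK}\subset M_J$), the chain
\[
\EE_K(f)=\max_{\mu_K\in\QQ_K}\mu_K f=\max_{\mu_J\in\QQ_J}\mu_J(f\circ\pr_{JK})=\EE_J(f\circ\pr_{JK})
\]
closes this direction at once.

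For the converse I would prove the two inclusions defining $\QQ_J\circ\pr_{JK}^{-1}=\QQ_K$. The inclusion $\subset$ is immediate: for $\mu_J\in\QQ_J$ and $f\in M_K$ one has $(\mu_J\circ\pr_{JK}^{-1})f=\mu_J(f\circ\pr_{JK})\leq\EE_J(f\circ\pr_{JK})=\EE_K(f)$, using $M_K\circ\pr_{JK}\subset M_J$ and consistency of $(\EE_J)$, so $\mu_J\circ\pr_{JK}^{-1}\in\QQ_K$. The reverse inclusion $\supset$ is where the real work lies and is the step I expect to be the main obstacle. Given $\mu_K\in\QQ_K$, I would produce a preimage $\mu_J\in\QQ_J$ by a Hahn--Banach extension. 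On the subspace $N:=M_K\circ\pr_{JK}\subset M_J$ define $\la(f\circ\pr_{JK}):=\mu_K f$; this is well defined and linear because $\pr_{JK}\colon S^J\to S^K$ is surjective (as $K\subset J$), so $f\circ\pr_{JK}$ determines $f$. Consistency of $(\EE_J)$ gives $\la(f\circ\pr_{JK})=\mu_K f\leq\EE_K(f)=\EE_J(f\circ\pr_{JK})$, i.e. $\la\leq\EE_J$ on $N$.

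Since $\EE_J$ is a sublinear functional on $M_J$, the Hahn--Banach theorem furnishes a linear extension $\mu_J$ of $\la$ to all of $M_J$ with $\mu_J\leq\EE_J$. The domination bound gives $|\mu_J(g)|\leq\max\{\EE_J(g),\EE_J(-g)\}\leq\|g\|_\infty$ for all $g\in M_J$, using the estimate on pre-expectations noted after Definition \ref{2.1}; hence $\mu_J\in(M_J)'$, and $\mu_J\leq\EE_J$ means $\mu_J\in\QQ_J$. Finally $\mu_J|_N=\la$ says precisely that $\mu_J(f\circ\pr_{JK})=\mu_K f$ for all $f\in M_K$, that is $\mu_J\circ\pr_{JK}^{-1}=\mu_K$, which completes the inclusion and the proof. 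The only points needing care are the well-definedness of $\la$ via surjectivity of the projection and the continuity of the extended functional, both of which I expect to dispatch quickly with the $1$-Lipschitz bound on $\EE_J$.
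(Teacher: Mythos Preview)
Your proof is correct. Both directions are handled soundly, and the identification $\QQ_J=\{\mu_J\in(M_J)':\EE_J^*(\mu_J)=0\}$ together with the representation from Lemma~\ref{compactness} is exactly the right starting point. The direction ``$(\QQ_J)$ consistent $\Rightarrow$ $(\EE_J)$ consistent'' matches the paper's argument verbatim.

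The forward direction, however, is where your route diverges from the paper's. The paper delegates the implication ``$(\EE_J)$ consistent $\Rightarrow$ $(\QQ_J)$ consistent'' to Lemma~\ref{transfo}, which proves the surjectivity $\QQ_J\circ\pr_{JK}^{-1}\supset\QQ_K$ by a \emph{duality-and-separation} argument: one observes that the image $\QQ_J\circ\pr_{JK}^{-1}$ is convex and compact (as the continuous linear image of the compact convex set $\QQ_J$), that it has the same support functional as $\QQ_K$ (namely $\EE_K$, by consistency), and then concludes equality via Hahn--Banach separation. Your argument instead produces the preimage \emph{constructively} by a Hahn--Banach \emph{extension}: you define the partial functional $\la$ on $M_K\circ\pr_{JK}$, dominate it by the sublinear $\EE_J$, and extend. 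This avoids any appeal to compactness of $\QQ_J$ or its image and is slightly more elementary; the paper's approach, on the other hand, isolates a reusable transformation lemma that applies to arbitrary maps $T$ rather than just projections. Both are clean, and the only delicate points in yours---well-definedness of $\la$ via surjectivity of $\pr_{JK}$ and continuity of the extension from the bound $|\mu_J g|\le\|g\|_\infty$---are correctly addressed.
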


\begin{proof}
Suppose that  $(\EE_J)_{J\in \HH}$ is consistent. Then, by Lemma \ref{transfo}, we obtain that the family $(Q_J)_{J\in \HH}$ is consistent, as well.

Now suppose that the family $(\QQ_J)_{J\in \HH}$ is consistent and let $J,K\in \HH$ with $K\subset J$. Then, by Lemma \ref{compactness}, we get that
\begin{align*}
 \EE_K(f)&=\max_{\mu_K\in \QQ_K}\mu_Kf=\max_{\mu_K\in \QQ_J\circ \pr_{JK}^{-1}} \mu_K f\\
 &=\max_{\mu_J\in \QQ_J}\mu_J(f\circ \pr_{JK})=\EE_J(f\circ \pr_{JK})
\end{align*}
for all $f\in M_K$.
\end{proof}

In the following, we denote by $\BB^I$ the product $\si$-algebra, which is generated by the sets of the form $\pr_J^{-1}(B)$, where $J\in \HH$ and $B\in \BB^J$.

\begin{proposition}\label{riskkolmogorov}
 Let $(\EE_J)_{J\in \HH}$ be a consistent family of pre-expectations $\EE_J\colon M_J\to\R$. Then, there exists an expectation $\hat\EE\colon \LL^\infty(S^I,\BB^I)\to \R$
 such that
\[
\hat\EE(f\circ \pr_J)=\EE_J(f)\quad\mbox{for all }J\in \HH\mbox{ and all }f\in M_J.
\]
If the pre-expectations $\EE_J$ are convex or sublinear for all $J\in \HH$, then $\hat\EE$ is convex or sublinear, respectively.
\end{proposition}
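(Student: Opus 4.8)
The plan is to first assemble the $\EE_J$ into a single pre-expectation on the subspace
\[
M:=\{f\circ\pr_J\colon J\in\HH,\ f\in M_J\}\subset\LL^\infty(S^I,\BB^I)
\]
and then invoke the maximal extension of Proposition \ref{bacara}. Before defining anything I would check that $M$ is a linear subspace containing $1$: given $f\circ\pr_J$ and $g\circ\pr_K$ with $f\in M_J$, $g\in M_K$, I set $L:=J\cup K\in\HH$ and write $f\circ\pr_J=(f\circ\pr_{LJ})\circ\pr_L$ and $g\circ\pr_K=(g\circ\pr_{LK})\circ\pr_L$, using $\pr_{LJ}\circ\pr_L=\pr_J$. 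Since $M_J\circ\pr_{LJ}\subset M_L$ and $M_K\circ\pr_{LK}\subset M_L$ and $M_L$ is a linear space, every linear combination again lies in $M$, and $1=1\circ\pr_J\in M$. Each $f\circ\pr_J$ is bounded and $\BB^I$-$\BB(\R)$-measurable because $\pr_J$ is $\BB^I$-$\BB^J$-measurable.

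Next I would define $\EE(f\circ\pr_J):=\EE_J(f)$. The main obstacle is that this formula depends a priori on the chosen representation: a single $h\in M$ may equal both $f\circ\pr_J$ and $g\circ\pr_K$. This is exactly where consistency enters. Assuming $S\neq\es$ (otherwise the statement is vacuous), the projection $\pr_L\colon S^I\to S^L$ is surjective for every finite $L$, so $f\circ\pr_J=g\circ\pr_K$ forces $f\circ\pr_{LJ}=g\circ\pr_{LK}$ as elements of $M_L$ with $L=J\cup K$. Applying the consistency condition twice then gives $\EE_J(f)=\EE_L(f\circ\pr_{LJ})=\EE_L(g\circ\pr_{LK})=\EE_K(g)$, so $\EE$ is well defined. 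The same surjectivity together with the lift to $L=J\cup K$ handles monotonicity: if $f\circ\pr_J\le g\circ\pr_K$ pointwise on $S^I$, then $f\circ\pr_{LJ}\le g\circ\pr_{LK}$ on $S^L$, and consistency plus monotonicity of $\EE_L$ yield $\EE_J(f)\le\EE_K(g)$. Constant preservation is immediate from $\EE(\al)=\EE_J(\al)=\al$.

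For the convexity and sublinearity claims I would use the same common-refinement device. Writing any two elements of $M$ over a common index set $L=J\cup K$, a convex combination $\la(f\circ\pr_J)+(1-\la)(g\circ\pr_K)$ equals $(\la\,f\circ\pr_{LJ}+(1-\la)\,g\circ\pr_{LK})\circ\pr_L$, so convexity of $\EE_L$ transfers directly to $\EE$; positive homogeneity follows from $\la(f\circ\pr_J)=(\la f)\circ\pr_J$, and subadditivity again from the common refinement, giving sublinearity of $\EE$ whenever each $\EE_J$ is sublinear.

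Finally I would apply Proposition \ref{bacara} with $\Om=S^I$ and $\FF=\BB^I$: it produces the maximal expectation $\hat\EE\colon\LL^\infty(S^I,\BB^I)\to\R$ with $\hat\EE|_M=\EE$, whence $\hat\EE(f\circ\pr_J)=\EE(f\circ\pr_J)=\EE_J(f)$ for all $J\in\HH$ and $f\in M_J$. Parts b) and c) of that proposition ensure $\hat\EE$ inherits convexity and sublinearity from $\EE$, which completes all the assertions. I expect the only genuinely delicate point to be the well-definedness step, where the surjectivity of the projections---needed to pass from an equality of functions on $S^I$ to an equality on the finite factor $S^L$---must be used explicitly; everything else is a routine transfer of the finite-dimensional properties through the common-refinement trick.
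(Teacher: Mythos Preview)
Your proposal is correct and follows exactly the same approach as the paper: assemble a pre-expectation $\EE$ on $M=\{f\circ\pr_J:J\in\HH,\ f\in M_J\}$ using consistency, then apply Proposition~\ref{bacara}. The paper's own proof is a terse three-line version of what you wrote, and your explicit treatment of well-definedness via surjectivity of the projections and the common-refinement trick fills in precisely the details the paper leaves to the reader.
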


\begin{proof}
 Let $M:=\{f\circ \pr_J\colon f\in M_J, \; J\in \HH\}$. Then $M$ is a linear subspace of $\LL^\infty(S^I,\BB^I)$ with $1\in M$.
 For every $J\in \HH$ and $f\in M_J$ let $\EE(f\circ \pr_J):=\EE_J(f)$. Since the family $(\EE_J)_{J\in \HH}$ is consistent,
 the functional $\EE\colon M\to\R$ is well-defined. Moreover, $\EE\colon M\to \R$ is a pre-expectation on $M$.
The assertion then follows from Proposition \ref{bacara}.
\end{proof}

Note that Proposition \ref{riskkolmogorov} still holds without the assumption that $S$ is a Polish space. In fact, $S$ could be an arbitrary state space. If $\EE_J$ is linear for all $J\in \HH$, by Remark \ref{2.6}, we obtain that
\[
\hat\EE(f)=\sup_{\nu\in \PP} \nu f
\]
where $\PP:=\{\nu\in \ba_+^1(S^I,\BB^I)\colon \nu\circ \pr_J=\EE_J\mbox{ for all } J\in \HH\}$.
For all $J\in \HH$ let $[(M_J)']_+^1$ denote the set of all linear pre-expectations $\mu_J\colon M_J\to \R$.

\begin{corollary}\label{barobustkolmogorov}
Suppose that $\QQ_J\subset [(M_J)']_+^1$ is convex and compact for all $J\in \HH$, and
the family $(\QQ_J)_{J\in \HH}$ is consistent.
Then, there exists a convex and compact set $\QQ\subset \ba_+^1(S^I,\BB^I)$ such that $\QQ\circ \pr_J^{-1}=\QQ_J$ for all $J\in \HH$, where $(\mu\circ \pr_J^{-1})f:=\mu(f\circ \pr_J)$
for all $\mu\in \ba(S^I,\BB^I)$ and $f\in M_J$.
\end{corollary}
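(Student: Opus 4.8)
The plan is to generate everything from the support functions of the sets $\QQ_J$ and then read off $\QQ$ as the subgradient set of the resulting Kolmogorov extension. First I would turn the data into pre-expectations: for each $J\in\HH$ define $\EE_J\colon M_J\to\R$ by $\EE_J(f):=\max_{\mu_J\in\QQ_J}\mu_J f$, which is well defined and finite because $\QQ_J$ is compact and consists of linear pre-expectations. By the sublinear part of Lemma \ref{compactness}, $\EE_J$ is a sublinear pre-expectation. Since $\QQ_J$ is convex and compact, a Hahn--Banach separation argument in $\big((M_J)',\mbox{weak}^*\big)$, whose topological dual is $M_J$, recovers exactly
\[
\QQ_J=\{\mu_J\in(M_J)'\colon \mu_J f\leq\EE_J(f)\mbox{ for all }f\in M_J\}.
\]
Hence the sets produced from $(\EE_J)_{J\in\HH}$ in Lemma \ref{equivalence} coincide with the given $\QQ_J$, and so the consistency of $(\QQ_J)_{J\in\HH}$ yields, via Lemma \ref{equivalence}, the consistency of $(\EE_J)_{J\in\HH}$.

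Next I would apply Proposition \ref{riskkolmogorov} to the consistent sublinear family $(\EE_J)_{J\in\HH}$ to obtain a sublinear expectation $\hat\EE\colon\LL^\infty(S^I,\BB^I)\to\R$ with $\hat\EE(f\circ\pr_J)=\EE_J(f)$ for all $J\in\HH$ and $f\in M_J$. Applying Lemma \ref{compactness} to $\hat\EE$ on all of $\LL^\infty(S^I,\BB^I)$, the set
\[
\QQ:=\{\nu\in\ba_+^1(S^I,\BB^I)\colon \hat\EE^*(\nu)=0\}
\]
is convex and weak$^*$-compact, and yields the representation $\hat\EE(X)=\max_{\nu\in\QQ}\nu X$ for all $X\in\LL^\infty(S^I,\BB^I)$ (in particular $\QQ\neq\es$). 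This is my candidate set.

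It then remains to verify $\QQ\circ\pr_J^{-1}=\QQ_J$ for every $J\in\HH$. The inclusion ``$\subseteq$'' is immediate: for $\nu\in\QQ$ one has $(\nu\circ\pr_J^{-1})f=\nu(f\circ\pr_J)\leq\hat\EE(f\circ\pr_J)=\EE_J(f)$ for all $f\in M_J$, and $\nu\circ\pr_J^{-1}$ is a linear pre-expectation, so it lies in $\QQ_J$ by the characterization above. For the reverse inclusion I would first note that $\nu\mapsto\nu\circ\pr_J^{-1}$ is weak$^*$-continuous, so $\QQ\circ\pr_J^{-1}$ is a convex, compact subset of $(M_J)'$. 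If some $\mu_J\in\QQ_J$ failed to lie in it, Hahn--Banach separation would supply $g\in M_J$ with
\[
\mu_J g>\max_{\nu\in\QQ}(\nu\circ\pr_J^{-1})g=\max_{\nu\in\QQ}\nu(g\circ\pr_J)=\hat\EE(g\circ\pr_J)=\EE_J(g),
\]
which contradicts $\mu_J g\leq\EE_J(g)$, valid because $\mu_J\in\QQ_J$ and $\EE_J$ is the support function of $\QQ_J$. Hence $\QQ_J\subseteq\QQ\circ\pr_J^{-1}$, and the two sets agree.

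The main obstacle is exactly this surjectivity of the projections, the inclusion ``$\supseteq$''. It rests on the subgradient set $\QQ$ of $\hat\EE$ being genuinely large enough to represent $\hat\EE$, i.e. on the attainment in the dual representation of Lemma \ref{compactness} applied to $\hat\EE$, combined with the identity $\hat\EE(g\circ\pr_J)=\EE_J(g)$ transporting the constraint back to level $J$. The weak$^*$-compactness of both $\QQ$ and $\QQ_J$ is what allows the separation argument to close, so the compactness furnished by Lemma \ref{compactness} is essential rather than cosmetic.
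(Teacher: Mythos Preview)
Your proposal is correct and follows essentially the same route as the paper: define $\EE_J$ as the support function of $\QQ_J$, recover $\QQ_J$ via Hahn--Banach, pass to consistency of $(\EE_J)$ by Lemma~\ref{equivalence}, extend via Proposition~\ref{riskkolmogorov}, and take $\QQ$ as the subgradient set of $\hat\EE$. The only cosmetic difference is that the paper packages your final separation argument for $\QQ\circ\pr_J^{-1}=\QQ_J$ into the auxiliary Lemma~\ref{transfo}, whereas you write it out inline.
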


\begin{proof}
For each $J\in\HH$ define the sublinear pre-expectation \[\EE_J(f):=\max_{\mu_J\in \QQ_J} \mu_J f\quad \mbox{for all }f\in M_J.\]
Since $\QQ_J\subset [(M_J)']_+^1$ is convex and compact, the separation theorem of Hahn-Banach implies that
\begin{equation}\label{hahnbanach}
 \QQ_J=\{\mu_J\in M_J'\colon  \mu_J f\leq \EE_J(f)\mbox{ for all } f\in M_J\}.
\end{equation}
Hence, by Lemma \ref{equivalence}, the family $(\EE_J)_{J\in \HH}$ is consistent, and by Theorem \ref{riskkolmogorov} there exists a sublinear
expectation $\hat\EE\colon \LL^\infty(S^I,\BB^I)\to \R$ such that
\begin{equation}\label{consistency}
 \hat\EE(f\circ \pr_J)=\EE_J(f)\quad\mbox{for all }J\in \HH \mbox{ and all }f\in M_J.
\end{equation}
 By Lemma \ref{compactness}, $\hat\EE(f)=\max_{\mu\in\QQ}\mu f$ for all $f\in \LL^\infty(S^I,\BB^I)$ where
\[
 \QQ:=\{\mu\in \ba_+^1(S^I,\BB^I)\colon  \mu f\leq \hat \EE(f)\mbox{ for all } f\in \LL^\infty(S^I,\BB^I)\}.
\]
By Lemma \ref{transfo}, we thus obtain that $\QQ\circ \pr_J^{-1}=\QQ_J$ for all $J\in \HH$.
\end{proof}

\begin{theorem}\label{extension}
For every $J\in \HH$, let $M_J$ be a Riesz subspace of $\LL^\infty(S^J,\BB^J)$ with $\si (M_J)=\BB^J$ and $\EE_J\colon M_J\to \R$ be a convex pre-expectation which is continuous from above. Assume that the family $(\EE_J)_{J\in \HH}$ is consistent.
Then, there exists exactly one expectation $\bar\EE\colon \LL^\infty(S^I,\BB^I)\to \R$ which is continuous from below  on $\LL^\infty(S^I,\BB^I)$ and continuous from above on $M_\de$, where $M:=\{f\circ\pr_J\colon f\in M_J, \, J\in \HH\}$, such that
\[
\EE_J(f)=\bar\EE(f\circ \pr_J)=\sup_{\nu\in \ca_+^1(S^I,\BB^I)}\big(\nu (f\circ \pr_J)-\bar\EE^*(\nu)\big)
\]
for all $J\in \HH$ and all $f\in M_J$. Moreover, $\bar\EE$ is convex and if the pre-expectations $\EE_J$ are sublinear or linear for all $J\in \HH$, then $\bar\EE$ is sublinear or linear, respectively.
\end{theorem}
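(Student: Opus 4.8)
The plan is to reduce the statement to the general continuous extension result, Theorem \ref{robustcara}, applied to the subspace $M := \{f \circ \pr_J : f \in M_J,\, J \in \HH\}$ of $\LL^\infty(S^I, \BB^I)$. First I would record three structural facts. By consistency, Proposition \ref{riskkolmogorov} furnishes a well-defined pre-expectation $\EE \colon M \to \R$ with $\EE(f \circ \pr_J) = \EE_J(f)$; it is convex, since any two elements $f \circ \pr_J$ and $h \circ \pr_K$ of $M$ may be rewritten over the common finite index set $L := J \cup K \in \HH$ via the inclusions $M_J \circ \pr_{LJ}, M_K \circ \pr_{LK} \subset M_L$, whence convexity of $\EE_L$ applies together with consistency. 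The same rewriting shows that $M$ is a Riesz subspace, because each $M_L$ is. Finally, as $\si(M_J) = \BB^J$ for every $J$, the functions $f \circ \pr_J$ generate exactly $\pr_J^{-1}(\BB^J)$, so that $\si(M) = \BB^I$. Thus every hypothesis of Theorem \ref{robustcara} is met except one: that $\EE$ is continuous from above on $M$.

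Establishing this continuity is the heart of the argument, and the only place where the Polish assumption on $S$ enters. By Lemma \ref{contabove} it suffices to show that every $\mu \in M'$ with $\EE^*(\mu) < \infty$ is continuous from above. Fixing such a $\mu$, I would set $\mu_J(f) := \mu(f \circ \pr_J)$ for $f \in M_J$; a direct estimate gives $\EE_J^*(\mu_J) \le \EE^*(\mu) < \infty$, so by Lemma \ref{contabove} applied to $\EE_J$ the functional $\mu_J$ is a linear pre-expectation continuous from above on the Riesz subspace $M_J$ with $\si(M_J) = \BB^J$. The Daniell--Stone theorem then extends $\mu_J$ to a unique measure $\tilde\mu_J \in \ca_+^1(S^J, \BB^J)$, and the identity $\mu_K(f) = \mu((f \circ \pr_{JK}) \circ \pr_J) = \mu_J(f \circ \pr_{JK})$ shows that the family $(\tilde\mu_J)_{J \in \HH}$ has consistent marginals. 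Here I invoke the classical Kolmogorov extension theorem, valid precisely because $S$ is Polish, to obtain a single measure $\tilde\mu \in \ca_+^1(S^I, \BB^I)$ with $\tilde\mu \circ \pr_J^{-1} = \tilde\mu_J$ for all $J$; by construction $\tilde\mu$ agrees with $\mu$ on $M$. Since a $\si$-additive probability measure is continuous from above on all of $\LL^\infty(S^I, \BB^I)$ by dominated convergence, and elements of $M$ decrease pointwise on $S^I$, the functional $\mu = \tilde\mu|_M$ is continuous from above, as required.

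With continuity from above of $\EE$ secured, Theorem \ref{robustcara} (applied with $\Om = S^I$ and $\FF = \BB^I = \si(M)$) delivers a unique expectation $\bar\EE$ that is continuous from below on $\LL^\infty(S^I, \BB^I)$, continuous from above on $M_\de$, extends $\EE$, is convex, and satisfies $\bar\EE(X) = \sup_{\nu \in \ca_+^1(S^I, \BB^I)}(\nu X - \EE^*(\nu|_M))$; combined with the identity $\bar\EE^*(\nu) = \EE^*(\nu|_M)$ (established exactly as in Theorem \ref{2.7}) this yields the stated representation of $\bar\EE(f \circ \pr_J) = \EE_J(f)$. Uniqueness transfers directly: any competitor with the prescribed marginals extends $\EE$ on $M$ and hence equals $\bar\EE$ by the uniqueness clause of Theorem \ref{robustcara}. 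For the refinements I would invoke Lemma \ref{compactness}: if every $\EE_J$ is sublinear then $\EE$ is sublinear, so $\EE^*(\nu|_M) \in \{0, \infty\}$ and the representation exhibits $\bar\EE$ as a supremum of linear functionals, i.e. sublinear; in the linear case $\EE$ is linear and continuous from above, so its $\ca_+^1$-extension is unique, collapsing the supremum to a single measure and rendering $\bar\EE$ linear. The main obstacle, as indicated, is the continuity-from-above step: the varying finite index sets of a decreasing sequence in $M$ preclude any direct reduction to a single $\EE_J$ and force the detour through genuine countably additive measures produced by classical Kolmogorov.
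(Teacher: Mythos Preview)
Your proof is correct and follows essentially the same route as the paper: define $\EE$ on $M$ via consistency, use Lemma~\ref{contabove} together with Daniell--Stone and the classical Kolmogorov theorem to establish continuity from above of $\EE$, and then apply Theorem~\ref{robustcara}. You are in fact slightly more explicit than the paper on a few points (the Riesz property of $M$, the identity $\bar\EE^*(\nu)=\EE^*(\nu|_M)$, and the sublinear/linear refinements), but the architecture and the key ideas coincide.
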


\begin{proof}
Define $\EE(f\circ \pr_J):=\EE_J(f)$ for all $f\in M_J$ and all $J\in \HH$. Since the family $(\EE_J)_{J\in \HH}$ is consistent, $\EE\colon M\to \R$ defines a convex pre-expectation on $M$.
Let $\mu\in M'$ with $\EE^*(\mu)<\infty$. We will first show that $\mu\colon M\to \R$ is continuous from above. Let $\mu_J:=\mu\circ \pr_J^{-1}\in M_J'$ for all $J\in \HH$. Then,
$(\EE_J)^*(\mu_J)\leq \EE^*(\mu)<\infty$ and by Lemma \ref{contabove} $\mu_J\colon M_J\to \R$ is continuous from above. By the theorem of Daniell-Stone, there exists a unique $\nu_J\in \ca_+^1(S^J,\BB^J)$ with $\nu_J|_{M_J}=\mu_J$ for all $J\in \HH$. As
\[
 \mu_K=\mu_J\circ \pr_{JK}^{-1}=(\nu_J\circ\pr_{JK}^{-1})|_{M_K},
\]
we thus obtain that $\nu_K=\nu_J\circ \pr_{JK}^{-1}$ for all $J,K\in \HH$ with $K\subset J$. By Kolmogorov's extension theorem, there exists a unique $\nu\in \ca_+^1(S^I,\BB^I)$ with $\nu(f\circ \pr_J)=\nu_J f$ for all $f\in \LL^\infty(S^J)$ and $J\in \HH$. Hence, we get that $\nu|_M=\mu$, and therefore $\mu\colon M\to \R$ is continuous from above, as well. By Lemma \ref{contabove} we thus obtain that $\EE\colon M\to \R$ is continuous from above.

Next, we will show that $\BB^I\subset \si(M)$. Let $J\in \HH$ and $B_J\in \BB^J$. Then, $\BB^J\in \si(M_J)$ and therefore $\pr_J^{-1}(B_J)\in \si(M_J\circ \pr_J)\subset \si(M)$.

Finally, since $M$ is a Riesz subspace of $\LL^\infty(S^I,\BB^I)$ with $1\in M$ and $\BB^I\subset \si(M)$, the assertion  follows from Theorem \ref{robustcara}.
\end{proof}

\begin{theorem}\label{kolmogorov}
For every $J\in \HH$, let $M_J$ be a Riesz subspace of $\LL^\infty(S^J,\BB^J)$ with $\si(M_J)=\BB^J$ and $\QQ_J\subset [(M_J)']_+^1$ be convex and compact. Assume that for all $J\in \HH$ every $\mu_J\in \QQ_J$ is continuous from above and that the family $(\QQ_J)_{J\in \HH}$ is consistent.
Then, there exists a set $\QQ\subset \ca_+^1(S^I,\BB^I)$ with
\[
\QQ_J=\QQ\circ \pr_J^{-1}\quad \mbox{for all }J\in \HH.
\]
\end{theorem}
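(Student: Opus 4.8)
The plan is to translate the dual (measure) data into the primal (expectation) language, invoke the extension result of Theorem \ref{extension}, and then read the projection identity off the representing set. First I would set $\EE_J(f):=\max_{\mu_J\in\QQ_J}\mu_J f$ for $f\in M_J$; since each $\QQ_J\subset[(M_J)']_+^1$ is convex and compact, $\EE_J$ is a well-defined sublinear pre-expectation and the Hahn-Banach separation theorem yields the characterization $\QQ_J=\{\mu_J\in M_J':\mu_J f\le\EE_J(f)\text{ for all }f\in M_J\}$ exactly as in \eqref{hahnbanach}. Because $\EE_J$ is sublinear, Lemma \ref{compactness} gives $\EE_J^*(\mu_J)\in\{0,\infty\}$ with $\EE_J^*(\mu_J)=0$ precisely when $\mu_J\in\QQ_J$; as every element of $\QQ_J$ is continuous from above by hypothesis, Lemma \ref{contabove} shows that $\EE_J$ itself is continuous from above. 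By Lemma \ref{equivalence} the consistency of $(\QQ_J)_{J\in\HH}$ transfers to $(\EE_J)_{J\in\HH}$, so Theorem \ref{extension} applies and produces a sublinear expectation $\bar\EE$ on $\LL^\infty(S^I,\BB^I)$ with $\bar\EE(f\circ\pr_J)=\EE_J(f)$. I would then define
\[
\QQ:=\{\nu\in\ca_+^1(S^I,\BB^I):\EE^*(\nu|_M)=0\}=\{\nu\in\ca_+^1(S^I,\BB^I):\nu g\le\bar\EE(g)\text{ for all }g\in M\},
\]
which by the sublinear representation in Theorem \ref{extension} satisfies $\bar\EE(X)=\sup_{\nu\in\QQ}\nu X$.

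For the inclusion $\QQ\circ\pr_J^{-1}\subseteq\QQ_J$, given $\nu\in\QQ$ and $f\in M_J$ one has $(\nu\circ\pr_J^{-1})f=\nu(f\circ\pr_J)\le\bar\EE(f\circ\pr_J)=\EE_J(f)$, so $\nu\circ\pr_J^{-1}\in M_J'$ lies below $\EE_J$ and hence belongs to $\QQ_J$ by the Hahn-Banach characterization above. This direction is routine.

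The substantive direction, and the main obstacle, is $\QQ_J\subseteq\QQ\circ\pr_J^{-1}$: given $\mu_J\in\QQ_J$ I must realize it as the $J$-marginal of some countably additive probability measure on the infinite product. The key step is to thread $\mu_J$ through a consistent family $(\mu_K)_{K\in\HH}$ with $\mu_K\in\QQ_K$. To obtain it I would work in the compact space $\prod_{K\in\HH}\QQ_K$ (compact by Tychonoff, since each $\QQ_K$ is compact) and intersect the closed sets $\{(\mu_M)_M:\mu_K\circ\pr_{KL}^{-1}=\mu_L\}$ over all $L\subset K$ together with $\{J\text{-coordinate}=\mu_J\}$; these are closed because the maps $\mu_K\mapsto\mu_K\circ\pr_{KL}^{-1}$ are continuous and the ambient spaces are Hausdorff. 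Any finite subfamily involves finitely many indices with union $K^\ast\in\HH$, and choosing $\mu_{K^\ast}\in\QQ_{K^\ast}$ with $\mu_{K^\ast}\circ\pr_{K^\ast J}^{-1}=\mu_J$ (possible by consistency) and setting $\mu_M:=\mu_{K^\ast}\circ\pr_{K^\ast M}^{-1}$ for $M\subseteq K^\ast$ satisfies all those constraints; hence the family of closed sets has the finite intersection property and, by compactness, a consistent thread through $\mu_J$ exists.

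Finally, since every $\mu_K$ is continuous from above, the Daniell-Stone theorem extends each to a unique $\nu_K\in\ca_+^1(S^K,\BB^K)$, and uniqueness forces $\nu_K\circ\pr_{KL}^{-1}=\nu_L$ on $\BB^L=\si(M_L)$; the classical Kolmogorov extension theorem, available because $S$ is Polish, then yields $\nu\in\ca_+^1(S^I,\BB^I)$ with $\nu\circ\pr_K^{-1}|_{\BB^K}=\nu_K$ for all $K\in\HH$. I would check that $\nu\in\QQ$, since $\nu(f\circ\pr_K)=\mu_K f\le\EE_K(f)=\bar\EE(f\circ\pr_K)$ for every $f\in M_K$ gives $\EE^*(\nu|_M)\le 0$, hence $\EE^*(\nu|_M)=0$ as $\nu(1)=1$; and $\nu\circ\pr_J^{-1}=\mu_J$ by construction, completing $\QQ_J\subseteq\QQ\circ\pr_J^{-1}$. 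I expect the compactness and finite-intersection argument producing the consistent thread to be the crux, with the Daniell-Stone and Kolmogorov steps being standard.
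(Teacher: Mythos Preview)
Your proof is correct, and its skeleton---define $\QQ$ by the domination/marginal condition, verify the easy inclusion $\QQ\circ\pr_J^{-1}\subset\QQ_J$ directly, produce a consistent thread $(\mu_K)_K$ through a given $\mu_{J_0}$, then apply Daniell--Stone followed by the classical Kolmogorov theorem---is exactly the paper's architecture. Two differences are worth flagging. First, your appeal to Theorem \ref{extension} is a detour: you never use $\bar\EE$ beyond its restriction to $M$, which equals $\EE$; your set $\QQ$ coincides with the paper's $\{\nu\in\ca_+^1(S^I,\BB^I):(\nu\circ\pr_J^{-1})|_{M_J}\in\QQ_J\text{ for all }J\}$, and every verification goes through with $\EE$ alone. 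Second, and more interestingly, your threading step is a direct projective-limit argument via Tychonoff and the finite intersection property in $\prod_K\QQ_K$, whereas the paper instead invokes Corollary \ref{barobustkolmogorov} to obtain a single $\mu\in\ba_+^1(S^I,\BB^I)$ with $\mu\circ\pr_{J_0}^{-1}=\mu_{J_0}$ and $\mu\circ\pr_K^{-1}\in\QQ_K$ for all $K$, then sets $\mu_K:=\mu\circ\pr_K^{-1}$. The two are the same compactness phenomenon in different clothing (weak$^*$ compactness of the $\ba$ unit ball is Tychonoff); your version is more self-contained at that point, while the paper's is shorter because it recycles Corollary \ref{barobustkolmogorov}.
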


\begin{proof}
Let
\[
 \QQ:=\{\mu\in \ca_+^1(S^I,\BB^I)\colon (\mu\circ \pr_J^{-1})|_{M_J}\in \QQ_J\mbox{ for all }J\in \HH\}.
\]
Then, $\QQ\circ \pr_J^{-1}\subset \QQ_J$ for all $J\in \HH$. In order to show the other implication, let $J_0\in \HH$ and $\mu_{J_0}\in \QQ_{J_0}$ be fixed. By Corollary \ref{barobustkolmogorov}, there exists a $\mu\in \ba_+(S^I,\BB^I)$ with $\mu\circ\pr_{J_0}^{-1}=\mu_{J_0}$ and $\mu\circ \pr_J^{-1}\in \QQ_J$ for all $J\in \HH$. Let $\mu_J:=\mu\circ \pr_J^{-1}\in \QQ_J$ for all $J\in \HH\sm \{J_0\}$. Then, the family $(\mu_J)_{J\in \HH}$ is consistent and $\mu_J\in \QQ_J$ is continuous from above for all $J\in \HH$. By the theorem of Daniell-Stone, there exists a unique $\nu_J\in \ca_+^1(S^J,\BB^J)$ with $\nu_J|_{M_J}=\mu_J$ for all $J\in \HH$. Since
\[
 \mu_K=\mu_J\circ \pr_{JK}^{-1}=(\nu_J\circ\pr_{JK}^{-1})|_{M_K},
\]
we obtain that $\nu_K=\nu_J\circ \pr_{JK}^{-1}$ for all $J,K\in \HH$ with $K\subset J$. Hence, by Kolmogorov's extension theorem, there exists a
unique $\nu\in \ca_+^1(S^I,\BB^I)$ with $\nu\circ \pr_J^{-1}=\nu_J$ for all $J\in \HH$. Hence, $\nu\in \QQ$ and satisfies $(\nu\circ\pr_{J_0}^{-1})|_{M_{J_0}}=\mu_{J_0}$.
\end{proof}

\begin{example}
 Let $S:=\{0,1\}$ be endowed with the topology $2^S$. Then, $S$ is a Polish space with Borel-$\si$-algebra $2^S$. Let $\HH:=\{J\subset \N\colon \#J\in \N\}$ be the set of all finite nonempty subsets of $\N$. Then, for all $J\in \HH$ we have that $\#S^J<\infty$ and therefore the product $\si$-algebra $\BB^J$ is the power set $2^{S^J}$ and $\LL^\infty(S^J,\BB^J)$ consists of all functions $S^J\to \R$. Let $M:=\{f\circ \pr_J\colon  J\in \HH,\, f\colon S^J\to \R \}$. For $y\in S^\N$ let $\de_y\in \ca_+^1(S^\N,\BB^\N)$ denote the Dirac measure given by $\de_y(B)=1_B(y)$ for $B\in \BB^\N$.
 \begin{enumerate}
  \item[a)] For $n\in \N$ let $S^n:=S^{\{1,\ldots, n\}}$ and $\pr_n:=\pr_{\{1,\ldots, n\}}$. Let $y\in S^\N$ and $\EE\colon M\to \R$ be given by $\EE(g):=\de_y g$ for $g\in M$. Let $f:=1_{S^\N\sm \{y\}}\in \LL^\infty(S^\N,\BB^\N)$. For $n\in \N$ let
  \begin{align*}
   B_n&:=\pr_n^{-1}\big(\{(y_1,\ldots, y_n)\}\big)\\&=\big\{x\in S^\N\colon x_i=y_i \mbox{ for all }i\in \{1,\ldots, n\}\big\}\in \BB^\N
  \end{align*}
  and $g_n:=1_{S^\N\sm B_n}=1-1_{B_n}\in M$. Then, we have that $g_n\nearrow f$ as $n\to \infty$, i.e. $f\in M_\si$. In fact, by definition we have that $g_n(y)=0=f(y)$ for all $n\in \N$. Let $x\in S^\N\sm \{y\}$. Then, there exists some $i\in \N$ with $x_i\neq y_i$ and therefore $g_n(x)\nearrow 1=f(x)$ as $n\to \infty$. As $y\in B_n$ we have that $\hat\EE(g_n)=\EE(g_n)=\de_y g_n=0$ for all $n\in \N$. Let $g\in M$ with $g\geq f$. Then, we have that $g(x)\geq f(x)=1$ for all $x\in S^\N\sm\{y\}$. On the other hand, there exists some $J\in \HH$ and some $h\colon S^J\to \R$ such that $g=h\circ \pr_J$. As $\#S=2>1$, there exists some $x\in S^\N\sm \{y\}$ with $\pr_J(x)=\pr_J(y)$ and therefore
  \[
   g(y)=h(\pr_J(y))=h(\pr_J(x))=g(x)\geq 1.
  \]
  This shows $g(x)\geq 1$ for all $x\in S^\N$. As $1\geq f$ and $1\in M$ we obtain that
  \[
  \hat\EE(f)=1\neq 0=\lim_{n\to \infty}\hat\EE(g_n).
  \]
  This shows that in general $\hat\EE$ is not continuous from below, not even on $M_\si$.
  \item[b)] In general, we may not expect that the set $\QQ\subset \ca_+^1(S^I,\BB^I)$ from Theorem \ref{kolmogorov} to be compact, not even if $\QQ_J$ is convex for all $J\in \HH$. In fact, let $\QQ:=\ca_+^1(S^\N,\BB^\N)$. Then, $\QQ$ is convex and $\si(\ca_+^1(S^\N,\BB^\N),\LL^\infty(S^\N,\BB^\N))$-closed. But, $\QQ$ is not a compact subset of $\ba_+^1(S^\N,\BB^\N)$ as $\ca_+^1(S^\N,\BB^\N)\neq \ba_+^1(S^\N,\BB^\N)$. On the other hand, we have that $\QQ\circ \pr_J^{-1}=\ba_+^1(S^J,\BB^J)$ is a convex and compact for all $J\in \HH$.
  \item[c)] Let $\QQ:=\ca_+^1(S^\N,\BB^\N)$ and $\PP:=\{\nu\in \ca_+^1(S^\N,\BB^\N) : \nu(\{y\})=0\}$ for some $y\in S^\N$. Then, we have that $\PP$ and $\QQ$ are both convex and $\si(\ca_+^1(S^\N,\BB^\N),\LL^\infty(S^\N,\BB^\N))$-closed. Let $J\in \HH$. Since $\#S=2>1$, there exists some $x_J\in S^\N\setminus \{y\}$ with $\pr_J(x_J)=\pr_J(y)$ and therefore
  \[
   \PP\circ \pr_J^{-1}=\ba_+^1(S^J,\BB^J)=\QQ\circ \pr_J^{-1}
  \]
  is compact. On the other hand we have that $\PP\neq \QQ$. This shows that in Theorem \ref{kolmogorov} no uniqueness can be obtained. A similar example shows that also in Theorem \ref{barobustkolmogorov} uniqueness cannnot be obtained.
 \end{enumerate}
\end{example}

\begin{example}
 Let $0<\underline\si\leq \overline\si$, $\underline\mu\leq \overline\mu$ and $T>0$. Let $n\in \N$, $\si\in [\underline\si,\overline\si]^n$, $\mu\in [\underline\mu,\overline\mu]^n$ and $0\leq t_1<\ldots<t_n\leq T$. For $J:=\{t_1,\ldots,t_n\}$ and $f\in C_b(\R^n)$ let
 \[
  \E_J^{\mu,\si}(f):=\int_{\R^n} f(x_1,x_1+x_2,\ldots ,x_1+\ldots+ x_n)\, {\rm d}N^{\mu,\si}_J(x_1,\ldots, x_n),
 \]
for the product of normal distributions
\[
 N^{\mu,\si}_J:=\bigotimes_{k=1}^nN\big(\mu_k(t_k-t_{k-1}),\si_k^2(t_k-t_{k-1})\big)
\]
with $t_0:=0$ and $N(0,0):=\de_0$. Moreover, let
\[
 \EE_J(f):=\sup_{\mu\in [\underline\mu,\overline\mu]^n,\si\in [\underline\si,\overline\si]^n}\E_J^{\mu,\si}(f)
\]
for all $f\in C_b(\R^n)$. We equip $\ca_+^1(\R^n,\BB(\R)^n)$ with the $C_b(\R^n)$-weak topology. Then, the mapping
\[
 \R^n\times [0,\infty)^n\to \ca_+^1(\R^n,\BB(\R)^n),\quad (\mu,\si)\mapsto N^{\mu,\si}_J
\]
is continuous by L\'evy's continuity theorem or by direct computation (note that it suffices to verify sequential continuity as $\R^n\times [0,\infty)^n$ is a metric space). Let $s\colon \R^n\to \R^n$ be given by
\[
 s(x_1,\ldots, x_n):=(x_1,x_1+x_2,\ldots, x_1+\ldots+x_n)\quad \mbox{for all }x_1,\ldots, x_n\in \R.
\]
As $s\colon \R^n\to \R^n$ is continuous, the mapping
\[
 \ca_+^1(\R^n,\BB(\R^n))\to \ca_+^1(\R^n,\BB(\R^n)), \quad \nu\mapsto \nu\circ s^{-1}
\]
is continuous and therefore, the mapping
\[
 \R^n\times [0,\infty)^n\to \ca_+^1(\R^n,\BB(\R)^n),\quad (\mu,\si)\mapsto \E_J^{\mu,\si}
\]
is continuous. As $[\underline\mu,\overline\mu]^n\times [\underline\si,\overline \si]^n\subset \R^n\times [0,\infty)^n$ is compact, we thus obtain that the family
\[
 \QQ_J:=\{\E_J^{\mu,\si}\colon \mu\in [\underline\mu,\overline\mu]^n,\si\in [\underline\si,\overline\si]^n\}\subset C_b(\R^n)'
\]
is compact. Since $\QQ_J$ is convex and compact, we get that $\EE_J$ is continuous from above with
\[
 \QQ_J=\{\mu_J\in C_b(\R^n)'\colon \mu_Jf\leq \EE_J(f)\mbox{ for all }f\in C_b(\R^n)\}
\]
for each $J\in \HH$. As the family $(\QQ_J)_{J\in \HH}$ is consistent, we thus get that the family $(\EE_J)_{J\in \HH}$ is consistent by Lemma \ref{equivalence}. Hence, we may apply Theorem \ref{extension} and Theorem \ref{kolmogorov} and obtain an expectation $\bar\EE\colon \LL^\infty(\R^{[0,T]},\BB(\R)^{[0,T]})\to \R$ and a set $\QQ\subset \ca_+^1(\R^{[0,T]},\BB(\R)^{[0,T]})$ with $\EE\circ \pr_J^{-1}=\EE_J$ and $\QQ\circ\pr_J^{-1}=\QQ_J$ for all $J\in \HH$. However, this is not the $G$-expectation introduced by Peng \cite{PengG},\cite{MR2474349}, see also Example \ref{controlapp}.
\end{example}


\section{Application to nonlinear kernels}\label{sec5}

Let $(S,\BB)$ be a measurable space. We will apply the nonlinear Kolmogorov theorem to nonlinear kernels. We follow the definition of a monetary risk kernel by
F\"ollmer and Kl\"uppelberg \cite{follmer2014spatial} and define nonlinear kernels in an analogous way. We will use the results from the previous section to extend these nonlinear kernels.
Throughout this subsection, let $M,N\subset \LL^\infty(S,\BB)$ with $\R\subset M$ and $\R\subset N$.

\begin{definition}
A \textit{(nonlinear) pre-kernel} from $M$ to $N$ is a function $\EE\colon S \times M\to \R$ such that
\begin{enumerate}
 \item[(i)] for each $x\in S$, the function $M\to \R,\; f\mapsto \EE(x,f)$ is a (nonlinear) pre-expectation,
 \item[(ii)] for every $f\in M$, the function $\EE(\, \cdot\, , f)\in N$.
\end{enumerate}
We say that a pre-kernel $\EE$ from $M$ to $N$ is convex, sublinear, continuous from above, or continuous from below,
if for every  $x\in S$ the function $\EE(x,\, \cdot \, )$ is convex, sublinear, continuous from above, or continuous from below, respectively.
\end{definition}

For two pre-kernels $\EE_0,\EE_1$ from $M$ to $M$ we write
\[
 (\EE_0\EE_1)(x,f):=\EE_0(x, \EE_1(\, \cdot\, ,f))
\]
for all $x\in S$ and all $f\in M$. Then, one easily checks that $\EE_0\EE_1$ is a pre-kernel, again.

\begin{definition}
We say that a family $(\EE_{s,t})_{0\leq s<t< \infty}$ of pre-kernels from $M$ to $M$ fulfills the
\textit{Chapman-Kolmogorov equations}, if
$
 \EE_{s,u}=\EE_{s,t}\EE_{t,u}
$
for all $0\leq s<t<u<\infty$.
\end{definition}

\begin{example}\label{markchain}
 Let $S$ be a finite state space and $\BB:=2^S$, so that $\LL^\infty(S,\BB)=\R^S$. Let \[\PP\colon \LL^\infty(S,\BB)\to \LL^\infty(S,\BB)\quad and \quad \mu_0\colon \LL^\infty(S,\BB)\to \R\] be convex and therefore continuous, constant preserving, i.e.~$\PP(\al)=\al$ and $\mu_0(\al)=\al$ for all $\al\in \R$, and monotone, i.e.~$\PP(f)\leq \PP(g)$
and  $\mu_0(f)\leq \mu_0(g)$ for all $f,g\in \LL^\infty(S,\BB)$ with $f\le g$.
 For every $k,l\in \N_0$ with $k< l$, we define
 \[
  \EE_{k,l}(\, \cdot\, , f):=\PP^{l-k}(f)\quad\mbox{for all }f\in \LL^\infty(S,\BB).
 \]
 Then, $\EE_{k,l}\colon S\times \LL^\infty(S,\BB)\to \R$ defines a convex kernel from $\LL^\infty(S,\BB)$ to $\LL^\infty(S,\BB)$ for all $k,l\in \N_0$ with $k< l$. Let $\HH:=\{J\subset \N_0\colon \# J\in \N\}$ be the set of all finite, nonempty subsets of $\N_0$. For $k\in\N_0$ we define
\[
 \EE_{\{k\}}(f)
 :=\mu_0(\PP^k(f))\quad \mbox{for all }f\in \LL^\infty(S,\BB),
\]
where ${\PP}^0$ is the identity. For $n\in \N$, $k_1,\ldots, k_{n+1}\in \N_0$ with $k_1<\ldots<k_{n+1}$ and $f\in \LL^\infty(S^{n+1},\BB^{n+1})$ we now define recursively
\[
 \EE_{\{k_1,\ldots, k_{n+1}\}}(f):=\EE_{\{k_1,\ldots, k_n\}}(g)
\]
where $g(x_1,\ldots, x_n):=\EE_{k_n,k_{n+1}}(x_n,f(x_1,\ldots, x_n,\, \cdot \, ))$ for all $x_1,\ldots, x_n\in S$.
Then, $\EE_J\colon \LL^\infty(S^J,\BB^J)\to \R$ is a convex expectation which is continuous from above for all $J\in \HH$.
Since the family $(\EE_{k,l})_{0\leq k<l}$ fulfills the Chapman-Kolmogorov equations, we obtain that the family $(\EE_J)_{J\in \HH}$ is consistent. Hence, by Theorem \ref{extension} there exists an expectation $\EE\colon \LL^\infty(S^{\N_0},\BB^{\N_0})\to \R$ which is continuous from below and satisfies
\[
 \EE((f\circ\pr_k)(g\circ\pr_l))=\mu_0(\PP^k(f\PP^{l-k}(g)))
\]
for all $f,g\in \LL^\infty(S,\BB)$ and $k,l\in \N_0$ with $k<l$. Hence, $(\pr_k)_{k\in \N_0}$ can be viewed as a convex Markov chain on $(S^{\N_0},\BB^{\N_0},\EE)$. If $\PP$ is sublinear, the set
\[
\big\{\mu\in \R^{S\times S}\colon \mu f\leq \PP(f)\; \text{for all}\; f\in \R^S \big\}
\]
induces a Markov-set chain, see Hartfiel \cite{MR1725607}.
\end{example}

In the following, let $S$ be a Polish space with metric $d$ and Borel $\si$-algebra $\BB$. We denote by $\BUC(S)$ the space of all bounded and uniformly continuous functions w.r.t.~the metric $d$. On general state spaces the measurability of $g$
in the above example is non-trivial. In the following we will therefore consider pre-kernels from $C_b(S)$ to $C_b(S)$.

\begin{remark}\label{remnonlinearkern}\
 \begin{enumerate}
  \item[a)] We have that $\BUC(S)_\si=\LSC_b(S)$, where $\LSC_b(S)$ denotes the space of all bounded lower semicontinuous functions $S\to \R$. In fact, the implication $\BUC(S)_\si\subset \LSC_b(S)$ is obvious. To show the inverse implication, let $f\in \LSC_b(S)$, where w.l.o.g.~we may assume that $f\geq 0$ (otherwise consider $f+\|f\|_\infty$). For $k,n\in \N_0$ let $U_k^n:=\{x\in S\colon f(x)> k 2^{-n}\}$. As $U_n^k$ is open, we have that $ k 2^{-n} 1_{U_k^n}\in \BUC(S)_\si$ for all $k,n\in \N_0$. Note that $n\big( d(x,U^c)\wedge \frac{1}{n}\big)\nearrow 1_U(x)$ as $n\to \infty$ for all $x\in S$ and any open set $U\subset S$. Finally, for all $n\in \N_0$ let
 \[
  f_n:=\sup_{k\in \N_0} k 2^{-n} 1_{U_k^n}.
 \]
 Then, we have that $f_n\in \BUC(S)_\si$ with $f_n\leq f_{n+1}\leq f$ and $\|f-f_n\|_\infty\leq 2^{-n}$ for all $n\in \N_0$. In particular, $f_n\nearrow f$ as $n\to \infty$, and therefore, $f\in \BUC(S)_\si$. As $\BUC(S)$ is a vector space, we thus obtain that $\BUC(S)_\de=\USC_b(S)$, where $\USC_b(S)$ denotes the space of all bounded upper semicontinuous functions $S\to \R$.
 \item[b)] Let  $M$ be a dense subspace of $\BUC(S)$ with $1\in M$, and
$\EE$ a convex pre-kernel from $M$ to $M$. Then, as $\EE$ is $1$-Lipschitz, there exists exactly one convex pre-kernel $\hat\EE$ from $\BUC(S)$ to $\BUC(S)$ with $\hat\EE|_{M}=\EE$.
 \item[c)] Let $\EE$ be a convex pre-kernel from $\BUC(S)$ to $C_b(S)$, which is continuous from above. Then, there exists exactly one convex pre-kernel $\hat\EE$ from $C_b(S)$ to $C_b(S)$, which is continuous from above and satisfies $\hat\EE|_{\BUC(S)}=\EE$. Indeed, by part a) and Lemma \ref{edelta}, there exists a convex kernel $\hat\EE$ from $C_b(S)$ to $\LL^\infty(S)$, which is continuous from above and extends $\EE$. Since $C_b(S)$ is a vector space, it follows that $\hat\EE$ is continuous from below, as well.  By part a), for $f\in C_b(S)$ there exist sequences $(f_n)_{n\in \N}$  and  $(g_n)_{n\in \N}$ in $\BUC(S)$ with $f_n\searrow f$ and $g_n\nearrow f$ as $n\to \infty$. Therefore,
 \[
  \inf_{n\in \N}\EE(f_n)=\hat\EE(f)=\sup_{n\in \N}\EE(g_n),
 \]
 which shows that $\hat\EE(f)\in C_b(S)$.
 \item[d)] Let $(\EE_{s,t})_{0\leq s<t<\infty}$ be a family of convex pre-kernels from $C_b(S)$ to $C_b(S)$, which are continuous from above. Moreover, let $M\subset \BUC(S)$ be a dense subspace of $\BUC(S)$ with $1\in M$. Then, by the uniqueness obtained in part b) and c), the following statements are equivalent:
 \begin{enumerate}
 \item[(i)] $(\EE_{s,t})_{0\leq s<t< \infty}$ satisfies the Chapman-Kolmogorov equations,
 \item[(ii)] $\EE_{s,u}(f)=\EE_{s,t}(\EE_{t,u}(f))$ for all $f\in \BUC(S)$ and $0\leq s<t<u<\infty$,
 \item[(iii)] $\EE_{s,u}(f)=\EE_{s,t}(\EE_{t,u}(f))$ for all $f\in M$ and $0\leq s<t<u<\infty$.
 \end{enumerate}
 Therefore, the extension of convex kernels from $\BUC(S)$ to $\BUC(S)$ or from $M$ to $M$, which are continuous from above, are included in the extension of pre-kernels from $C_b(S)$ to $C_b(S)$, which are continuous from above.
 \end{enumerate}
\end{remark}

\begin{proposition}\label{parameterdep}
Let $\EE$ be a convex pre-kernel from $C_b(S)$ to $C_b(S)$, which is continuous from above.
Then, for every Polish space $T$ the parameter dependent version
 \[
  \EE_{T}\colon (S\times T)\times C_b(S\times T)\to \R,\quad ((x,y),f)\mapsto \EE(x,f(\, \cdot \,,y))
 \]
 is a convex pre-kernel from $C_b(S\times T)$ to $C_b(S\times T)$, which is continuous from above.
\end{proposition}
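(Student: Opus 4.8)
The plan is to verify the slicewise algebraic properties directly and to reduce the kernel property $\EE_T(\cdot,f)\in C_b(S\times T)$ to a uniform tightness estimate. Write $F(x,y):=\EE(x,f(\cdot,y))$ for $f\in C_b(S\times T)$; since $f(\cdot,y)\in C_b(S)$ for each fixed $y$, $F$ is well defined. For fixed $(x,y)$ the map $f\mapsto\EE_T((x,y),f)=\EE(x,f(\cdot,y))$ inherits monotonicity, constant preservation, convexity and continuity from above from $\EE(x,\cdot)$, because all of these are tested on the slices $f(\cdot,y)$, and $f_n\searrow f$ in $C_b(S\times T)$ forces $f_n(\cdot,y)\searrow f(\cdot,y)$ in $C_b(S)$. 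Boundedness of $F$ follows from $|\EE(x,f(\cdot,y))|\le\|f(\cdot,y)\|_\infty\le\|f\|_\infty$. It remains to prove that $F$ is continuous, and since $S\times T$ is Polish, hence metrizable, it suffices to check sequential continuity.

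First I would fix a sequence $(x_n,y_n)\to(x_0,y_0)$ and split $|F(x_n,y_n)-F(x_0,y_0)|\le A_n+B_n$ with $A_n:=|\EE(x_n,f(\cdot,y_n))-\EE(x_n,f(\cdot,y_0))|$ and $B_n:=|\EE(x_n,f(\cdot,y_0))-\EE(x_0,f(\cdot,y_0))|$. The term $B_n$ tends to $0$ because $g:=f(\cdot,y_0)\in C_b(S)$, so $\EE(\cdot,g)\in C_b(S)$ is continuous and $x_n\to x_0$. For $A_n$, set $g_n:=f(\cdot,y_n)$ and $R:=\|f\|_\infty$. Since $L:=\{y_0\}\cup\{y_n:n\in\N\}$ is compact, $f$ is uniformly continuous on $C\times L$ for every compact $C\subset S$, so $g_n\to g$ uniformly on compact subsets of $S$. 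However, $g_n\to g$ fails in $\|\cdot\|_\infty$ in general, so the $1$-Lipschitz property of $\EE(x_n,\cdot)$ alone does not bound $A_n$; the missing ingredient is that the tails of the representing measures are negligible uniformly in $x_n$.

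The heart of the proof, and the step I expect to be hardest, is a uniform tightness estimate over the compact set $K:=\{x_0\}\cup\{x_n:n\in\N\}$. By Lemma \ref{compactness}, continuity from above of $\EE(x,\cdot)$, Lemma \ref{contabove} and the Daniell--Stone theorem, each slice admits the representation $\EE(x,h)=\max\{\mu h-\al_x(\mu):\mu\in\ca_+^1(S,\BB),\ \al_x(\mu)\le 2R\}$ for $h\in C_b(S)$ with $\|h\|_\infty\le R$, where $\al_x:=\EE(x,\cdot)^\ast$. I would show that for every $\ep>0$ there is a compact set $C\subset S$ with $\sup_{x\in K}\sup\{\mu(S\setminus C):\al_x(\mu)\le 2R\}\le\ep$. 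For continuous $0\le\ps\le1$ and $\la>0$ the representation gives $\mu(\la(1-\ps))\le\EE(x,\la(1-\ps))+\al_x(\mu)$, whence $\mu(1-\ps)\le\la^{-1}\EE(x,\la(1-\ps))+\la^{-1}2R$ for every admissible $\mu$. Fixing $\la$ large and letting $\ps$ run through a sequence $\ps_N\nearrow1$ with $\la(1-\ps_N)\searrow0$, continuity from above yields $\EE(x,\la(1-\ps_N))\searrow0$ pointwise in $x$, and since $x\mapsto\EE(x,\la(1-\ps_N))$ is continuous, Dini's theorem upgrades this to uniform convergence on the compact set $K$. The genuine compact set $C$ is then extracted by an Ulam-type construction: running this estimate for a countable family of finite unions of $1/k$-balls covering $S$ and intersecting over $k$ produces a closed, totally bounded, hence compact, set $C$ with $\mu(S\setminus C)\le\ep$ uniformly. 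The main subtlety is that $S$ need not be locally compact, so one cannot work with a single compactly supported cutoff; passing through the representing measures, which see $S\setminus C$ through an indicator rather than a continuous cutoff, is what makes the tail estimate legitimate.

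With such a $C$ fixed I would finish as follows. For $x\in K$ and any admissible $\mu$, $\mu g_n-\al_x(\mu)=\mu g-\al_x(\mu)+\int_C(g_n-g)\,{\rm d}\mu+\int_{S\setminus C}(g_n-g)\,{\rm d}\mu\le\EE(x,g)+\sup_{z\in C}|g_n(z)-g(z)|+2R\ep$, using $|g_n-g|\le 2R$ and $\mu(C)\le1$. Taking the supremum over $\mu$ and interchanging the roles of $g_n$ and $g$ gives $\sup_{x\in K}|\EE(x,g_n)-\EE(x,g)|\le\sup_{z\in C}|g_n(z)-g(z)|+2R\ep$. Since $g_n\to g$ uniformly on the compact set $C$, the right-hand side converges to $2R\ep$, and as $\ep>0$ was arbitrary we conclude $A_n\to0$. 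Combined with $B_n\to0$ this proves $F(x_n,y_n)\to F(x_0,y_0)$, so $F\in C_b(S\times T)$ and $\EE_T$ is a convex pre-kernel from $C_b(S\times T)$ to $C_b(S\times T)$ which is continuous from above.
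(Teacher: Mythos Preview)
Your proof is correct, but it takes a substantially different route from the paper's. The paper first restricts to $f\in\BUC(S\times T)$; for such $f$, uniform continuity gives $\|f(\cdot,y)-f(\cdot,y_0)\|_\infty\to 0$ as $y\to y_0$, so the $1$-Lipschitz property of $\EE(x,\cdot)$ alone handles your term $A_n$, and no tightness or dual representation is needed. The extension to all $f\in C_b(S\times T)$ is then obtained by invoking Remark~\ref{remnonlinearkern}~c): since $\EE_T$ restricted to $\BUC(S\times T)$ is a convex pre-kernel into $C_b(S\times T)$ that is continuous from above, its unique extension to $C_b(S\times T)$ (given by monotone sandwich via $\BUC$ approximants) coincides with $\EE_T$ and still maps into $C_b(S\times T)$.

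By contrast, you attack general $f\in C_b(S\times T)$ directly: you pass to the dual representation, establish uniform tightness of the sets $\{\mu:\al_x(\mu)\le 2R\}$ over the compact trajectory $K=\{x_n\}\cup\{x_0\}$ via Dini on $x\mapsto\EE(x,\la(1-\ps_N))$, and then exploit that $f(\cdot,y_n)\to f(\cdot,y_0)$ uniformly on compacts. This works and yields a useful byproduct (the uniform tightness estimate), but it is technically heavier than the paper's two-step reduction. The paper's argument trades your Prokhorov/Ulam construction for the observation that the awkward case---slices converging only locally uniformly---simply does not occur for $\BUC$ functions, and that the passage from $\BUC$ to $C_b$ has already been packaged in Remark~\ref{remnonlinearkern}.
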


\begin{proof}
 First note that for all $(x,y)\in S\times T$, the function \[C_b(S\times T)\to \R,\quad f\mapsto \EE_T\big((x,y),f\big)\] is a convex pre-expectation on $C_b(S\times T)$, which is continuous from above.
 Let $f\in C_b(S\times T)$ be uniformly continuous w.r.t.~the metric $d(x,x^\prime)+d_T(y,y^\prime)$
 for $(x,y),(x',y')\in S\times T$, where $d_T$ is a metric on $T$. Fix $\ep>0$, and $(x_0,y_0)\in S\times T$. Then, there exists a $\delta>0$ such that
 \[
  |f(x,y_0)-f(x,y)|\leq\frac{\ep}{2}
 \]
 for all $(x,y)\in S\times T$ with $d_T(y,y_0)<\de$, and
 \[
  |\EE(x_0,f_{y_0})-\EE(x,f_{y_0})|\leq \frac{\ep}{2}
 \]
 for all $x\in S$ with $d(x,x_0)\leq\de$. Here, $f_{y}\colon S\to \R$ is defined as $x\mapsto f(x,y)$ for all $y\in T$.
 Then, $\|f_y-f_{y_0}\|_\infty\leq \frac{\ep}{2}$ for all $y\in T$ with $d_T(y,y_0)\leq \de$. Therefore,
\begin{align*}
|\EE_T((x_0,y_0),f)-\EE_T((x,y),f)|&=|\EE(x_0,f_{y_0})-\EE(x,f_y)|\\
&\leq |\EE(x_0,f_{y_0})-\EE(x,f_{y_0})|+|\EE(x,f_{y_0})-\EE(x,f_y)|\\
&\leq |\EE(x_0,f_{y_0})-\EE(x,f_{y_0})|+\|f_{y_0}-f_y\|_\infty\\
&<\frac{\ep}{2}+\frac{\ep}{2}=\ep
\end{align*}
for all $(x,y)\in S$ with $d(x,x_0)\leq\de$ and $d_T(y,y_0)\leq \de$. By Remark \ref{remnonlinearkern} c), we obtain the assertion.
\end{proof}

The following result allows to generalize the construction from Example \ref{markchain} to general Markov processes.

\begin{theorem}\label{chapman}
 Let $(\EE_{s,t})_{0\leq s<t<\infty}$ be a family of convex kernels from $C_b(S)$ to $C_b(S)$, which fulfills the
 Chapman-Kolmogorov equations and $\EE_0\colon C_b(S)\to \R$ be a convex pre-expectation. Further, assume that $\EE_0$ is continuous from above and
 $\EE_{s,t}$ is continuous from above for all $0\leq s<t\leq T$. Then, there exists a nonlinear expectation space $(\Om,\FF,\EE)$ and a stochastic process $(X_t)_{t\geq 0}$ of random variables $\Om\to S$, which satisfies
 \begin{enumerate}
  \item[(i)] $\EE\big(f(X_0)\big)=\EE_0(f)$ for all $f\in C_b(S)$,
  \item[(ii)] For all $0\leq s<t$, $n\in \N$, $0\leq t_1<\ldots <t_n\leq s$ and $f\in C_b(S^{n+1})$ we have that
  \[
   \EE\big(f(X_{t_1},\ldots,X_{t_n},X_t)\big)=\EE\left(\EE_{s,t}\big(X_s,f(X_{t_1},\ldots,X_{t_n},\, \cdot\,)\big)\right).
   \]
 \end{enumerate}

\end{theorem}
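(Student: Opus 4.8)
The plan is to put the process on its canonical path space and reduce the whole statement to the robust Kolmogorov extension theorem, Theorem \ref{extension}. I would take $I=[0,\infty)$ as index set, $\Om:=S^{[0,\infty)}$, $\FF:=\BB^{[0,\infty)}$, the coordinate maps $X_t\colon\Om\to S$, $\om\mapsto\om(t)$, and for each $J\in\HH$ the Riesz subspace $M_J:=C_b(S^J)$. Since $S$ is Polish, $S^J$ is Polish for finite $J$, so $C_b(S^J)$ is a Riesz subspace containing the constants with $\si(C_b(S^J))=\BB(S^J)=\BB^J$, and $C_b(S^K)\circ\pr_{JK}\subset C_b(S^J)$ for $K\subset J$ since $\pr_{JK}$ is continuous. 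Thus the standing hypotheses of Section \ref{sec4} and of Theorem \ref{extension} are in force, and it suffices to manufacture a \emph{consistent} family $(\EE_J)_{J\in\HH}$ of convex pre-expectations which are continuous from above and encode $\EE_0$ together with the transition kernels.

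For $J=\{t_1<\dots<t_n\}\in\HH$ and $f\in C_b(S^n)$ I would define $\EE_J(f):=\EE_0(h_1)$ when $t_1=0$ and $\EE_J(f):=\EE_0\big(\EE_{0,t_1}(\,\cdot\,,h_1)\big)$ when $t_1>0$, where $h_n:=f$ and
\[
h_{k-1}(x_1,\dots,x_{k-1}):=\EE_{t_{k-1},t_k}\big(x_{k-1},\,h_k(x_1,\dots,x_{k-1},\,\cdot\,)\big),\qquad k=n,\dots,2 ,
\]
in exact analogy with the recursion of Example \ref{markchain}. First I would verify $h_{k-1}\in C_b(S^{k-1})$: viewing the coordinates other than $x_{k-1}$ as parameters, this is Proposition \ref{parameterdep} applied with parameter space $S^{k-2}$, followed by composition with the continuous diagonal map that feeds $x_{k-1}$ into the kernel's spatial slot. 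Monotonicity and constant preservation propagate stepwise, and convexity propagates because each $\EE_{s,t}(x,\,\cdot\,)$ and $\EE_0$ are convex; hence $\EE_J$ is a convex pre-expectation. Continuity from above I would obtain by downward induction along the recursion: if $f_m\searrow f$ in $C_b(S^n)$, monotonicity of the kernels makes $h^{(m)}_k$ decreasing in $m$, and the continuity from above of $\EE_{t_k,t_{k+1}}(x_k,\,\cdot\,)$ at each fixed parameter yields $h^{(m)}_k\searrow h_k$ pointwise; the continuity from above of $\EE_0$ at the top then gives $\EE_J(f_m)\searrow\EE_J(f)$.

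The decisive point is consistency of $(\EE_J)_{J\in\HH}$, and this is exactly where the Chapman-Kolmogorov equations enter. For $K=J\sm\{t_j\}$ and $f\in C_b(S^K)$, the function $f\circ\pr_{JK}$ is independent of the $t_j$-coordinate, and I would track how the recursion for $\EE_J(f\circ\pr_{JK})$ collapses to the one for $\EE_K(f)$. If $t_j$ is an interior (or the smallest) time, the two recursion steps surrounding it combine, by the definition of kernel composition and the identity $\EE_{t_{j-1},t_j}\EE_{t_j,t_{j+1}}=\EE_{t_{j-1},t_{j+1}}$ (respectively $\EE_{0,t_1}\EE_{t_1,t_2}=\EE_{0,t_2}$ together with $\EE_0$), into the single step appearing in $\EE_K$. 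If $t_j$ is the largest time, the innermost function is constant in its last variable, so constant preservation of $\EE_{t_{j-1},t_j}(x_{j-1},\,\cdot\,)$ simply deletes that factor. I expect this bookkeeping---deciding, for the suppressed coordinate, whether to invoke Chapman-Kolmogorov or constant preservation---to be the main obstacle of the proof.

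With $(\EE_J)_{J\in\HH}$ a consistent family of convex pre-expectations continuous from above, Theorem \ref{extension} provides a unique convex expectation $\bar\EE\colon\LL^\infty(S^{[0,\infty)},\BB^{[0,\infty)})\to\R$ that is continuous from below, continuous from above on $M_\de$, and satisfies $\bar\EE(f\circ\pr_J)=\EE_J(f)$; I set $\EE:=\bar\EE$, so that $(\Om,\FF,\EE)$ is a convex expectation space. Property (i) is then immediate, since $\EE\big(f(X_0)\big)=\EE(f\circ\pr_{\{0\}})=\EE_{\{0\}}(f)=\EE_0(f)$. For property (ii) with $0\le t_1<\dots<t_n\le s<t$, both sides are finite-dimensional expectations read off from $\bar\EE(f\circ\pr_J)=\EE_J(f)$: the left-hand side is $\EE_{\{t_1,\dots,t_n,t\}}(f)$, while the right-hand side is $\EE_{\{t_1,\dots,t_n,s\}}(\tilde g)$ (or $\EE_{\{t_1,\dots,t_n\}}(\tilde g)$ when $s=t_n$) with $\tilde g:=\EE_{s,t}\big(\,\cdot\,,f(x_1,\dots,x_n,\,\cdot\,)\big)$, and the outermost recursion step together with $\EE_{t_n,s}\EE_{s,t}=\EE_{t_n,t}$ identifies the two. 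Thus (ii) is the very Chapman-Kolmogorov computation already used for consistency.
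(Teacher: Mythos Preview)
Your proposal is correct and follows essentially the same route as the paper: build the finite-dimensional family $(\EE_J)_{J\in\HH}$ on $M_J=C_b(S^J)$ by the recursive application of the kernels (with Proposition~\ref{parameterdep} guaranteeing $h_{k-1}\in C_b(S^{k-1})$), verify consistency via the Chapman--Kolmogorov equations, and then invoke Theorem~\ref{extension} on the canonical path space with the coordinate process. The paper's proof is terser---it absorbs your case distinction $t_1=0$ versus $t_1>0$ into the convention $\EE_{0,0}(\cdot,f):=f$, and it does not spell out the downward induction for continuity from above or the deletion-of-a-coordinate bookkeeping for consistency---but the argument is the same.
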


\begin{proof}
Let $\EE_{0,0}(\, \cdot\, , f):=f$ for all $f\in C_b(S)$ and $\HH:=\{J\subset [0,\infty)\colon \# J\in \N \}$ be the set of all finite, nonempty subsets of $[0,\infty)$. For $t\geq 0$ we define
\[
 \EE_{\{t\}}(f):=\EE_0(\EE_{0,t}(\, \cdot\, , f))\quad \mbox{for all }f\in C_b(S).
\]
For $n\in \N$, $0\leq t_1<\ldots<t_{n+1}\leq T$ and $f\in C_b(S^{n+1})$, we define recursively
\[
 \EE_{\{t_1,\ldots, t_{n+1}\}}(f):=\EE_{\{t_1,\ldots, t_n\}}(g),
\]
where $g(x_1,\ldots, x_n):=\EE_{t_n,t_{n+1}}(x_n,f(x_1,\ldots, x_n,\, \cdot \, ))$ for all $x_1,\ldots, x_n\in S$.
Note that $g\in C_b(S^n)$ by Proposition \ref{parameterdep}. Then $\EE_J\colon C_b(S^J)\to \R$ is a pre-expectation which is continuous from above for all $J\in \HH$. By the Chapman-Kolmogorov equations, we obtain that the family $(\EE_J)_{J\in \HH}$ is consistent. Therefore, by Theorem \ref{extension}, there exists a nonlinear epectation $\EE$ on the path space $(\Om,\FF):=\big(S^{[0,\infty)},\BB^{[0,\infty)}\big)$, such that $(\Om,\FF,\EE)$ is a convex expectation space and the canonical process $X_t(\omega)=\omega_t$, $t\in[0,\infty)$, satisfies (i) and (ii).
\end{proof}

\begin{example}\label{controlapp}
Let $U$ be a Polish space. Let $b\colon [0,T]\times \R^n\times U\to \R^n$ and $\si\colon [0,T]\times \R^n\times U\to \R^{n\times m}$ be uniformly continuous and assume that there is a constant $L>0$ such that for $\varphi\in \{b,\si\}$ one has
 \begin{itemize}
  \item $|\varphi(t,x_1,u)-\varphi(t,x_2,u)|\leq L|x_1-x_2|$ \;for all $t\in [0,T]$, $x_1,x_2\in \R^n$ and $u\in U$,
  \item $|\varphi(t,0,u)|\leq L$ \;for all $t\in [0,T]$ and $u\in U$.
 \end{itemize}
Following \cite[Chapter 4, Section 3]{MR1696772}, we denote by $\UU^\om([s,t])$ the set of all $5$-tuples $u^\om=(\Om,\FF,\P,W,u)$ satisfying the following:
\begin{enumerate}
 \item[(i)] $(\Om,\FF,\P)$ is a complete probability space.
 \item[(ii)] $(W_r)_{r\in [s,t]}$ is an $m$-dimensional standard Brownian motion defined on $(\Om,\FF,\P)$ over $[s,t]$ with $W_s=0$ $\P$-almost surely. Moreover, let $\FF_r^{s,t}=\si(W_\tau\colon s\leq \tau \leq r)$ augmented by all the $\P$-null sets in $\FF$ for all $s\leq r\leq t$.
 \item[(iii)] $u\colon [s,t]\times \Om\to U$ is $(\FF_r^{s,t})_{s\leq r\leq t}$-progressively measurable.
\end{enumerate}
For all $y\in \R^n$ and $u^\om=(\Om,\FF,\P,W,u)\in \UU^\om([s,t])$ let $(x_r(s,y,u^\om))_{r\in [s,t]}$ be the solution of the SDE
\[
 dx_r=b(t,x_r,u_r)dr+\si(t,x_r,u_r)dW_r,\; r\in (s,t], \quad x_s=y.
\]
We denote by $C_b^\th(\R^n)$ the space of all H\"older continuous functions with H\"older exponent $\th\in (0,1)$ and the corresponding H\"older norm by $\|\cdot\|_\th$. For $f\in C_b^\th(\R^n)$, $y\in \R^n$ and $u^\om=(\Om,\FF,\P,W,u)\in \UU^\om([s,t])$ we define
\[
\mu^{u^\om}_{s,t}(y,f):=\E_\P(f(x_t(s,y,u^\om))).
\]
Then, by H\"older's inequality with $p=\frac{1}{\th}$ and Theorem 6.16 in \cite[Chapter 1, p. 49]{MR1696772}, for all $f\in C_b^\th(\R^n)$ we have that
\begin{align*}
  |\mu_{s,t}^{u^\om}(y,f)-\mu_{s,t}^{u^\om}(z,f)|&\leq \E_\P\big(\big|f(x_t(s,y,u^\om))-f(x_t(s,z,u^\om))\big|\big)\\
  &\leq \|f\|_\th\E_\P(|x_t(s,y,u^\om)-x_t(s,z,u^\om)|^\th)\\
  &\leq \|f\|_\th\E_\P(|x_t(s,y,u^\om)-x_t(s,z,u^\om)|)^\th\\
  & \leq \|f\|_\th L_{s,t} |y-z|^\th
\end{align*}
for all $y,z\in \R^n$, where $L_{s,t}$ is independent of $y,z\in \R^n, f\in C_b^\th(S)$ and $u^\om\in \UU^\om([s,t])$. Hence, $\mu^{u^\om}_{s,t}$ defines a linear pre-kernel from $C_b^\th(\R^n)$ to $C_b^\th(\R^n)$. By Chebychev's inequality and Theorem 6.16 in \cite[Chapter 1, p. 49]{MR1696772} we get that
\[
 \P(|x_t(s,y,u^\om)|>M)\leq \frac{\E_\P(|x_t(s,y,u^\om)|^2)}{M^2}\leq \frac{C_T(1+|y|^2)|t-s|}{M^2}
\]
with a constant $C_T>0$ independent of $u^\om\in \UU^\om([s,t])$. Hence, the family $\{\mu^{u^\om}_{s,t}(y,\, \cdot\, )\colon u^\om\in \UU^\om([s,t])\}$ is tight. For all $f\in C_b^\th(\R^n)$ we define
\[
 \EE_{s,t}(y,f):=\sup_{u^\om\in \UU^\om([s,t])}\mu^{u^\om}_{s,t}(y,f).
\]
Therefore, $\EE_{s,t}$ defines a pre-kernel from $C_b^\th(\R^n)$ to $C_b^\th(\R^n)$, which is continuous from above and the dynamic programming principle \cite[Chapter 4, Theorem 3.3, p. 180]{MR1696772} implies that the family $(\EE_{s,t})_{0\leq s<t\leq T}$ satisfies the Chapman-Kolmogorov equations. Hence, by Theorem \ref{chapman} there exists a nonlinear expectation space $(\Om,\FF,\EE)$ and a stochastic process $(X_t)_{t\geq 0}$ of random variables $\Om\to \R^n$ which satisfies (i) and (ii) in Theorem \ref{chapman}. If $U\subset \R^{n\times n}$ is a compact nonempty subset of positive definite matrices, $b\equiv 0$ and $\si(t,x,u)=u$ the expectation $\EE$ coincides with the G-expectation introduced by Peng \cite{PengG},\cite{MR2474349}.
\end{example}


\begin{appendix}\section{}\label{appA}
In this appendix, we provide the proof of Lemma \ref{2.5} and state three other technical lemmas.
\begin{proof}[Proof of Lemma \ref{2.5}]
For $\alpha\in\R$, let $\PP_\alpha$ be the set of all linear functions $\mu\colon M\to \R$ with $\EE^*(\mu)\le \alpha$. For each $\mu\in\PP_\al$ and every $\la >0$ one has
\begin{align*}
 1-\la^{-1} \EE^*(\mu)&= -\la^{-1}(\EE(-\la)+\EE^*(\mu))\\
 &\leq -\la^{-1} \mu(-\la)=\mu 1=\la^{-1} \mu(\la)\\
 &\leq \la^{-1} (\EE(\la)+ \EE^*(\mu))= 1+\la^{-1} \EE^*(\mu),
\end{align*}
and therefore, letting $\la \to \infty$, we obtain $\mu 1=1$. Further, for all $X,Y\in M$ with $X\leq Y$ one has
\[
 \mu(X-Y)\leq \la^{-1}(\EE(\la (X-Y))+\EE^*(\mu))\leq \la^{-1}\EE^*(\mu)\to 0,\quad \la\to \infty.
\]
Hence, $\mu\colon M\to \R$ is a linear pre-expectation on $M$ and therefore continuous. Thus,
\[
 \PP_\al=\bigcap_{X\in M}\{\mu\in M'\colon \mu X\leq \EE(X)+\al\}
\]
is convex and a closed subset of the compact unit ball 
and therefore compact.

We next show \eqref{rep:M}. The inequality ``$\ge$'' follows by definition of $\EE^\ast$.
Fix $X\in M$ and
 let $\EE_0(\al):=\EE(\al X)$ for all $\al\in \R$. Then $\EE_0\colon \R\to \R$ is convex. Hence, there exists
 $m \in \R$ such that
 \[
  \EE(\al X)=\EE_0(\al)\geq \EE_0(1)+ m (\al -1)=(\EE(X)-m)+ m \al
 \]
 for all $\al \in \R$. By the theorem of Hahn-Banach there exists a linear functional $\mu\colon M\to\mathbb{R}$ such that
 \[
  \EE(Y)\geq (\EE(X)-m) +\mu Y
 \]
 for all $Y\in M$ and $\mu(\al X)= m \al$ for all $\al \in \R$. Hence,
 \[
 \mu Y-\EE(Y)\leq m -\EE(X)=:c
 \]
for all $Y\in M$ and $\mu X-\EE(X)=c$ so that $c=\EE^*(\mu)$. Thus, $\mu\in M'$ by the first part of the proof
and $\EE(X)=\mu X- \EE^*(\mu)$. For each $\mu\in M'$ with $\EE^*(\mu) > \alpha:= \|X\|_\infty-\EE(X)$ we have
\[ \mu X-\EE^*(\mu)\leq  \|X\|_\infty-\EE^*(\mu) < \|X\|_\infty -\alpha = \EE(X).\]
Therefore, the maximum in \eqref{rep:M} is attained on the set $\PP_\alpha$.

Finally, if $\EE$ is sublinear
let $\mu\in M'$ with $\EE^*(\mu)<\infty$. For each $X\in M$ and every $\la>0$ one has
\[
 \la (\mu X-\EE(X))=\mu (\la X)-\EE(\la X)\leq \EE^*(\mu)<\infty
\]
so that $\mu X-\EE(X)\leq 0$. Since $\EE(0)=0$, we obtain $\EE^*(\mu)=\sup_{X\in M}\left(\mu X-\EE(X)\right)=0$.
\end{proof}

\begin{lemma}\label{transfo}
 Let $M\subset \LL^\infty(\Om,\FF)$ be a linear subspace with $1\in M$, $\EE\colon M\to \R$ be a convex pre-expectation, $\Om_0\neq \es$ and $T\colon \Om\to \Om_0$ be an arbitrary mapping. Further, let $M_0\subset \LL^\infty(\Om_0,2^{\Om_0})$ be a linear subspace with $1\in M_0$ and $M_0\circ T:=\{Y\circ T\colon Y\in M_0\}\subset M$. Then,
 \[
   \EE\circ T^{-1}\colon M_0\to \R,\quad Y\mapsto \EE(Y\circ T)
 \]
defines a convex pre-expectation on $M_0$. If $\EE$ is sublinear, then $\EE\circ T^{-1}$ is sublinear and we have that
\[
 \{\nu\in M_0'\colon (\EE\circ T^{-1})^*(\nu)=0\}=\{\mu\circ T^{-1}\colon \mu\in M', \; \EE^*(\mu)=0\}.
\]
\end{lemma}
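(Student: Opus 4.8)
The plan is to check the structural properties of $\EE\circ T^{-1}$ directly and then to reduce the dual identity to a Hahn--Banach extension. For the first claim I would observe that $Y\mapsto Y\circ T$ is linear and order preserving from $M_0$ into $M$: if $Y_1\le Y_2$ in $M_0$ then $Y_1\circ T\le Y_2\circ T$ pointwise on $\Om$, and $(\al 1_{\Om_0})\circ T=\al 1_\Om$. Hence monotonicity and constant preservation transfer from $\EE$ to $\EE\circ T^{-1}$, while convexity follows from $(\la Y_1+(1-\la)Y_2)\circ T=\la(Y_1\circ T)+(1-\la)(Y_2\circ T)$ together with convexity of $\EE$. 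The same composition identities yield subadditivity and positive homogeneity of $\EE\circ T^{-1}$ whenever $\EE$ is sublinear.

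For the dual identity I would first rewrite both conjugate conditions as domination conditions: since $\EE(0)=0$, a functional $\mu\in M'$ satisfies $\EE^*(\mu)=0$ if and only if $\mu X\le\EE(X)$ for all $X\in M$, and analogously $(\EE\circ T^{-1})^*(\nu)=0$ is equivalent to $\nu Y\le\EE(Y\circ T)$ for all $Y\in M_0$. The inclusion ``$\supseteq$'' is then immediate: for $\mu\in M'$ with $\mu\le\EE$ on $M$, the functional $\nu:=\mu\circ T^{-1}$ lies in $M_0'$ because composition with $T$ does not increase the supremum norm, and $\nu Y=\mu(Y\circ T)\le\EE(Y\circ T)$ since $Y\circ T\in M$ by hypothesis.

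The genuine work is the reverse inclusion, and this is where I expect the main obstacle. Given $\nu\in M_0'$ with $\nu Y\le\EE(Y\circ T)$ for all $Y\in M_0$, I would define a functional on the subspace $M_0\circ T\subset M$ by $\mu_0(Y\circ T):=\nu Y$. Since $T$ need not be injective, the map $Y\mapsto Y\circ T$ may fail to be one-to-one, so the first task is well-definedness: if $Y\circ T=0$ then $\nu Y\le\EE(0)=0$ and $\nu(-Y)\le\EE(0)=0$, forcing $\nu Y=0$; thus $\mu_0$ is a well-defined linear functional on $M_0\circ T$ with $\mu_0\le\EE$ there. Because $\EE$ is a sublinear functional on $M$, the Hahn--Banach theorem then provides a linear extension $\mu\colon M\to\R$ with $\mu|_{M_0\circ T}=\mu_0$ and $\mu X\le\EE(X)$ for all $X\in M$; this bound gives $|\mu X|\le\|X\|_\infty$, so $\mu\in M'$ and $\EE^*(\mu)=0$. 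Finally $\mu(Y\circ T)=\nu Y$ for all $Y\in M_0$ is precisely the statement $\nu=\mu\circ T^{-1}$, which closes the identity.
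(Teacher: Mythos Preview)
Your proof is correct, and the structural part matches the paper exactly. For the dual identity, however, you take a genuinely different route for the nontrivial inclusion ``$\subseteq$''.

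The paper argues as follows: after showing ``$\supseteq$'' in the same way you do, it observes that the map $M'\to M_0'$, $\mu\mapsto\mu\circ T^{-1}$, is weak*-continuous, so the image $\{\mu\circ T^{-1}:\mu\in M',\;\EE^*(\mu)=0\}$ is convex and weak*-compact (the domain set being compact by Lemma~\ref{compactness}). Then the Hahn--Banach \emph{separation} theorem is invoked: any $\nu$ in the left-hand set but not in this compact convex image could be strictly separated from it by some $Y\in M_0$, which is impossible since both sets have the same support function $(\EE\circ T^{-1})(Y)$.

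You instead use the Hahn--Banach \emph{extension} theorem directly: given $\nu$ dominated by $\EE\circ T^{-1}$, you define $\mu_0$ on the subspace $M_0\circ T$ (checking well-definedness via the two-sided inequality $\nu Y\le 0$, $\nu(-Y)\le 0$ when $Y\circ T=0$), and extend it under the sublinear majorant $\EE$ to all of $M$. This is more elementary and self-contained: it avoids compactness entirely and does not rely on the representation machinery of Lemma~\ref{compactness}. The paper's approach is terser once that machinery is available and emphasizes the dual picture of two convex compact sets sharing a support function; your approach would survive even in settings where compactness of the level sets is not yet established.
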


\begin{proof}
 It is easily verified that $\EE\circ T^{-1}$ defines a convex pre-expectation on $M_0$. Let $\mu\in M'$ with $\EE^*(\mu)<\infty$. Then, we have that
  \[
   (\mu\circ T^{-1})(Y)-(\EE\circ T^{-1})(Y)=\mu (Y\circ T)-\EE(Y\circ T)\leq \EE^*(\mu).
  \]
  for all $Y\in M_0$. Hence, $(\EE\circ T^{-1})(\mu\circ T^{-1})\leq \EE^*(\mu)=0$. As the mapping $M'\to M_0',\; \mu\mapsto \mu\circ T^{-1}$ is continuous, we have that
  \[
   \{\mu\circ T^{-1}\colon \mu\in M', \; \EE^*(\mu)=0\}
  \]
  is compact. By the separation theorem of Hahn-Banach, it follows
  \[
   \{\nu\in M_0'\colon (\EE\circ T^{-1})^*(\nu)=0\}=\{\mu\circ T^{-1}\colon \mu\in M', \; \EE^*(\mu)=0\}.
  \]
\end{proof}

\end{appendix}

\bibliographystyle{amsplain}

\end{document}